\numberwithin{equation}{section}
\newtheorem{theorem}{Theorem}[section]
\newtheorem{lemma}[theorem]{Lemma}
\newtheorem{corollary}[theorem]{Corollary}
\newtheorem{remark}[theorem]{Remark}
\let\originalleft\left
\let\originalright\right
\renewcommand{\left}{\mathopen{}\mathclose\bgroup\originalleft}
\renewcommand{\right}{\aftergroup\egroup\originalright}
\newlength{\leftstackrelawd}
\newlength{\leftstackrelbwd}
\def\leftstackrel#1#2{\settowidth{\leftstackrelawd}%
	{${{}^{#1}}$}\settowidth{\leftstackrelbwd}{$#2$}%
	\addtolength{\leftstackrelawd}{-\leftstackrelbwd}%
	\leavevmode\ifthenelse{\lengthtest{\leftstackrelawd>0pt}}%
	{\kern-.5\leftstackrelawd}{}\mathrel{\mathop{#2}\limits^{#1}}}
\newcommand{\brc}[1]{  \left\{#1\right\} } 
\newcommand{\norm}[1]{  \left\|#1\right\| } 
\newcommand{\pare}[1]{\left(#1\right)}    
\newcommand{\cor}[1]{  \left[#1\right] }      
\newcommand{\abs}[1]{  \left\vert#1\right\vert }     
\newcommand{\bb}[1]{\mathbb{#1}}
\def\mathcolor#1#{\@mathcolor{#1}}
\def\@mathcolor#1#2#3{%
	\protect\leavevmode
	\begingroup
	\color#1{#2}#3%
	\endgroup
}
\def\11{\textbbm{1}}
\newcommand*{\avint}{\mathop{\ooalign{$\int$\cr$-$}}}
\def\vint_#1{\mathchoice%
          {\mathop{\kern 0.2em\vrule width 0.6em height 0.69678ex depth -0.58065ex
                  \kern -0.8em \intop}\nolimits_{\kern -0.4em#1}}%
          {\mathop{\kern 0.1em\vrule width 0.5em height 0.69678ex depth -0.60387ex
                  \kern -0.6em \intop}\nolimits_{#1}}%
          {\mathop{\kern 0.1em\vrule width 0.5em height 0.69678ex
              depth -0.60387ex
                  \kern -0.6em \intop}\nolimits_{#1}}%
          {\mathop{\kern 0.1em\vrule width 0.5em height 0.69678ex depth -0.60387ex
                  \kern -0.6em \intop}\nolimits_{#1}}}
\def\vintslides_#1{\mathchoice%
          {\mathop{\kern 0.1em\vrule width 0.5em height 0.697ex depth -0.581ex
                  \kern -0.6em \intop}\nolimits_{\kern -0.4em#1}}%
          {\mathop{\kern 0.1em\vrule width 0.3em height 0.697ex depth -0.604ex
                  \kern -0.4em \intop}\nolimits_{#1}}%
          {\mathop{\kern 0.1em\vrule width 0.3em height 0.697ex depth -0.604ex
                  \kern -0.4em \intop}\nolimits_{#1}}%
          {\mathop{\kern 0.1em\vrule width 0.3em height 0.697ex depth -0.604ex
                  \kern -0.4em \intop}\nolimits_{#1}}}
\newcommand{\kint}{\vint}
\newcommand{\aveint}[2]{\mathchoice%
          {\mathop{\kern 0.2em\vrule width 0.6em height 0.69678ex depth -0.58065ex
                  \kern -0.8em \intop}\nolimits_{\kern -0.45em#1}^{#2}}%
          {\mathop{\kern 0.1em\vrule width 0.5em height 0.69678ex depth -0.60387ex
                  \kern -0.6em \intop}\nolimits_{#1}^{#2}}%
          {\mathop{\kern 0.1em\vrule width 0.5em height 0.69678ex depth -0.60387ex
                  \kern -0.6em \intop}\nolimits_{#1}^{#2}}%
          {\mathop{\kern 0.1em\vrule width 0.5em height 0.69678ex depth -0.60387ex
                  \kern -0.6em \intop}\nolimits_{#1}^{#2}}}
\begin{document}

\title{\textbf{Mixing local and nonlocal evolution equations}}
\author{Monia Capanna and Julio D. Rossi}


\maketitle

\begin{abstract}
In this paper we study the homogenization of a stochastic process and its associated
evolution equations in which we mix a local
part (given by a Brownian motion with a reflection on the boundary) and 
a nonlocal part (given by a jump process with a smooth kernel). 
We consider a sequence of partitions of the (fixed) spacial domain into two parts (local and nonlocal) that are mixed in such a way that they both have positive
density at every point in the limit. Under adequate hypotheses on the sequence of partitions,
we prove convergence of the associated densities (that are solutions
to an evolution equation with coupled local and nonlocal parts in two different regions of the domain) 
to the unique solution to a limit evolution system 
in which the local part disappears and the nonlocal part survives but divided into
two different components.  We also obtain convergence in distributions of the 
processes associated to the partitions and prove that the limit process has a density pair that
coincides with the limit of the densities.
\end{abstract}


\noindent{\makebox[1in]\hrulefill}\newline2010 \textit{Mathematics Subject
Classification.} 	35B27, 45K05, 60J60, 35K15.
\newline\textit{Keywords and phrases.}  Local/Nonlocal diffusion, homogenization, stochastic processes.

\renewcommand{\theequation}{\arabic{section}.\arabic{equation}}

\section{Introduction}\label{sec.intro}

If you think about a linear diffusion equation, the first example that will come to your mind is the classical heat equation
\begin{equation} \label{heat.equation}
\frac{\partial u}{\partial t} (x,t) = \Delta u (x,t).
\end{equation}
This local partial differential equation is naturally associated with the Brownian motion $B_t$,
in the sense that \eqref{heat.equation} is the equation that appears for the probability density when one considers a 
particle that moves according to Brownian motion. For \eqref{heat.equation} the initial condition $u(x,0)=u_0(x)$ gives the initial distribution of the particle
and if we consider Neumann boundary conditions in a bounded domain we add the condition that the particle is reflected on the boundary.

If you go one step further and consider nonlocal diffusion problems,
one possible model is
\begin{equation} \label{eq:nonlocal.equation}
\frac{\partial u}{\partial t} (x,t) = \int J(x,y)(u(y,t)-u(x,t))
\, {d}y,
\end{equation}
where the kernel $J(x,y)$ is nonnegative, continuous in both variables and symmetric with
$\int J(x,y)\, dy  =1$ (these hypotheses on $J$ will be assumed in what follows). Notice that the diffusion of the density $u$ at a point $x$ and time $t$ depends on the values of $u$ at all points in the set $\mathop{\rm supp}(J(x,\cdot))$, which is what makes the diffusion operator nonlocal. Evolution equations of this form
and variations of it have been recently widely used to model
diffusion processes;  see for instance \cite{BLGneu,BCh, CF, CERW, delia1} and the book \cite{ElLibro}. As stated in \cite{F}, if $u(x,t)$ is thought of
as the density of a single population at the point $x$ at time $t$, and
$J(x,y)$ is regarded as the probability distribution of jumping from location
$y$ to location~$x$, then  the
rate at which individuals are arriving to position $x$ from all other places is given by $\int J(y,x)u(y,t)\,{d}y$, while  the rate at which they are
leaving location $x$ to travel to all other sites is given by $-\int J(y,x)u(x,t)\, {d}y=-u(x,t)$. Therefore,  in the
absence of external or internal sources, the
density $u$ satisfies equation (\ref{eq:nonlocal.equation}).
In this case there is also a process that governs the
evolution problem, namely, the jump process with probability of jumping from $x$ to $y$ given by $J(x,y)$.
Notice we are not assuming that
the nonlocal part is given by a convolution, that is, $J(x,y) = G(x-y)$.

Now we consider an open bounded domain $\Omega$ such that $\abs{\partial\Omega}=0$, we split its closure
$\overline\Omega$ into two disjoint pieces,
$\overline{\Omega} = A \cup B$, $A\cap B= \emptyset$ and consider an evolution 
that has local and nonlocal features according to the spacial position.
We will refer to $A$ as the nonlocal subdomain and $B$ as the local one.
Informally, let us consider a particle that 
may jump (according to the probability density $J(x,y)$ that generates the nonlocal evolution equation) 
when the initial point or the target point, $x$ or $y$,
belongs to the nonlocal region $A\subset \overline\Omega$ and it also
moves according to Brownian motion 
in the other subdomain $B= \overline\Omega \setminus A$ (with a reflexion at the boundary of $B$, hence we
assume Lipchitz regularity for $\partial B$). The associated evolution equation
has two main parts, one driven by the Laplacian with Neumann boundary conditions
(in the set $B$) and another one driven by a nonlocal operator (in $A$). See \eqref{gen.intro} and \eqref{evol.intro} below.

Here we look for an homogenization procedure of this local/nonlocal setting.
We take a sequence of partitions
$A_n$, $B_n$ of the fixed ambient space $\overline{\Omega}$ such that $\overline{\Omega} = A_n \cup B_n$, $A_n\cap B_n= \emptyset$,
$B_n$ is open, has a Lipchitz boundary (consequently $\abs{\partial B_n}=\abs{\partial A_n}=0$) and
\begin{equation} \label{cond.sets}
\begin{array}{l}
\bullet \  \chi_{B_n} (x) \rightharpoonup \theta (x), \qquad \mbox{ weakly in } L^\infty (\overline\Omega), 
\qquad \mbox{with } 0< \theta(x) <1, \\[10pt]
\bullet \mbox{ The connected components of $B_n$, $\{B_n^j\}_j$, verify }  \\[10pt]
\qquad \qquad  \max_j \left\{\mbox{diam} (B^j_n) \right\} \to 0, \mbox{ as } n \to \infty.
\end{array}
\end{equation}
Note that $\chi_{B_n} (x) \rightharpoonup \theta(x)$ implies
$\chi_{A_n} (x) \rightharpoonup 1-\theta(x)$. Also notice that the convergences $\chi_{B_n} (x) \rightharpoonup \theta (x)$ and $\chi_{A_n} (x) \rightharpoonup 1-\theta(x)$ imply that
$$
\int_E \chi_{B_n} (x)  \, dx\to \int_E \theta (x)  \, dx \qquad \mbox{and} \qquad 
\int_E \chi_{A_n} (x)  \, dx\to \int_E (1-\theta (x))  \, dx, \qquad \mbox{ as } n \to +\infty.
$$
Since we assume that $0< \theta(x) <1$ we have that for every $E\subseteq \overline\Omega$ with positive 
measure, $|E|>0$, it holds that $|E\cap A_n|>0$ and also $|E\cap B_n|>0$ for $n$ large which reflects the fact
that we are mixing the two sets $A_n$, $B_n$, in the whole $\overline{\Omega}$.

We study the evolution of a particle that moves into $\overline{\Omega}$ in a way that we describe in detail as follows:
first, we introduce $\brc{E_k}_{k\in \bb N}$ a family of independent exponential random variables and $J\in C\pare{\overline\Omega \times \overline\Omega}$ a symmetric kernel in $\overline\Omega \times \overline\Omega$ such that $\int_\Omega J(x, y)dy=1$ for all $x\in \overline\Omega$. Fixing $\tau_0=0$, we define recursively the random times
$\tau_k=\tau_{k-1}+E_k$, for every $k\in \bb N$.
From now on $X_n\pare{t}$ denotes the position of the particle at time $t$ and $X_n^-\pare{t}:=\lim_{s\to t^-}X_n\pare{s}$ almost surely. 
The evolution of the particle is described in the following terms: 
between two consecutive times $t\in (\tau_k, \tau_{k+1})$ whenever the process $X_n\pare{t}$ is in $B_n$, the particle moves like a Brownian motion which is reflected at the boundary $\partial B_n$, 
while when it is in $A_n$ it rests still. At the times $\{\tau_k\}$ the particle chooses a site $y\in \overline{\Omega}$ according to the kernel $J\pare{X^-_n(\tau_k), y}$ and it jumps on it with the following restriction: from the sites in $B_n$ only the jumps to the sites in $A_n$ are allowed (if the target point $y$ belongs to $B_n$ when 
$X^-_n(\tau_k) \in B_n$ then the 
jump is suppressed and the particle just continues moving according to Brownian motion from its current position).  
The initial position $X_n(0)$ is described in terms of a given distribution $u_0$ in $\overline\Omega$. More precisely, we suppose that
\begin{align}
P\pare{X_n\pare{0}\in E}=\int_{E} u_0(z) \, dz, \end{align}
for every measurable set $E\subseteq \overline\Omega$.

The process $X_n(t)$ is a Markov process whose generator ${L}_n$ is defined on functions  
$$f\in \mathcal S_n:=\left\{f:\overline\Omega\to\bb R\; : f\in C\pare{A_n}\cap C^2\pare{B_n}\, : \frac{\partial f}{\partial \eta}\mid_{\partial B_n}=0\right\}$$ 
(here and in what follows $\eta$ stands for the exterior unit normal vector to $\partial B_n$) and is given by
\begin{align}\label{gen.intro}
{L}_n f(x)=
&\chi_{A_n}\pare{x}\int_\Omega J\pare{x,y}\pare{f\pare{y}-f\pare{x}}dy\\[10pt]
&+\chi_{B_n}\pare{x}\int_\Omega \chi_{A_n}\pare{y}J\pare{x,y}\pare{f\pare{y}-f\pare{x}}dy+\frac{1}{2}\chi_{B_n}\pare{x} \Delta f \pare{x}.
\end{align}

The associated evolution problem (whose solution is the density of the process $X_n$, see 
Corollary \ref{corol.densidad}) reads as
\begin{equation}\label{evol.intro}
\left\{
\begin{array}{ll}
\displaystyle \frac{\partial u_n}{\partial t} (x,t) = {L}_n u_n(x,t), \qquad & x\in \Omega, \, t>0, \\[10pt]
u_n(x,0)=u_0(x), \qquad & x\in \Omega.
\end{array}
\right.
\end{equation}
Notice that in \eqref{evol.intro} we have for $x \in B_n$,
\begin{equation}  \label{eq:main.Cauchy.local}
\displaystyle \frac{\partial u_n}{\partial t} (x,t)=\frac{1}{2}\Delta u_n (x,t) +
\int_{A_n} J(x,y)(u_n(y,t)-u_n(x,t))\, {d}y,\qquad  x\in B_n,\ t>0, 
\end{equation}
with zero Neumann boundary conditions on $\partial B_n$
(this condition is encoded in the domain of the generator ${L}_n$),
\begin{equation}  \label{eq:main.Cauchy.local.nn}
\displaystyle \frac{\partial u_n}{\partial \eta} (x,t) = 0, \qquad  x\in \partial B_n, \, t>0, 
\end{equation}
and a nonlocal equation for $x\in A_n$, 
\begin{equation}  \label{eq:main.Cauchy.nonlocal}
\frac{\partial u_n}{\partial t} (x,t)= \int_{A_n} J(x,y) (u_n(y,t)- u_n(x,t))\, {d}y +
 \int_{B_n} J(x,y)(u_n(y,t)-u_n(x,t))\, {d}y ,
\quad x \in A_n,\ t>0.
\end{equation}
Notice that in both parts of the equation there are coupling terms (terms involving
values in $A_n$ when $x\in B_n$ and integrals in $B_n$ when $x\in A_n$). These
coupling terms are nonlocal ones and come from the jumps from $A_n$ to $B_n$
and from $B_n$ to $A_n$.

In \cite{GQR}, using pure analysis of PDE methods,
it is proved that the Cauchy, Neumann and Dirichlet problems (in the last two cases with zero boundary data) 
for this evolution equation, \eqref{evol.intro}, with an integrable initial data $u_0$, are well posed in $L^p$ spaces. 
Moreover, the authors prove that the solutions to these problems share several properties with the solutions of the corresponding evolutions for their local and nonlocal counterparts~\eqref{heat.equation} and~\eqref{eq:nonlocal.equation}: there is conservation of the total mass, 
a comparison principle holds, and solutions converge to the mean value of the initial conditions as $t\to \infty$. We refer also to 
\cite{delia1,delia2,delia3,Du,Gal,Kra} for other results on coupling local and nonlocal evolution equations.

Our goal is to take the limit, as $n\to +\infty$, both in the processes $X_n(t)$ and in the associated densities $u_n(x,t)$. 
To this end we need to look at the process $X_n(t)$ as a couple $\pare{X_n\pare{t}, I_n\pare{t}}$. In our notation $I_n(t)$ contains explicitly the information over the set ($A_n$ or $B_n$) in which $X_n(t)$ is located. More precisely, $I_n\pare{t}=1$ (or $2$) if the particle is in $A_n$ (or in $B_n$ respectively)
at time $t$.

Before stating our main result we need to introduce some notation. Given a metric space $X$, for $T>0$, we denote by $D\pare{[0, T], X}$ the space of all trajectories cadlag defined in $[0, T]$ and taking values in $X$. We consider $D\pare{[0, T], X}$ endowed with the Skorohod topology (see Chapter 3 of \cite{bil} for more details). 
Our process $\pare{X_n\pare{t}, I_n\pare{t}}_{t\in [0, T]}$ is in $D\pare{[0, T], \overline{\Omega}}\times D\pare{[0, T],\brc{1,2}}$ which we consider endowed with the product topology. 

Now we are ready to state our main result that reads as follows.
\begin{theorem} \label{teo.main.intro}
Assume \eqref{cond.sets} and fix $T>0$. We have that, as $n\to \infty$,
\begin{equation}\label{limite.debil.u.intro}
\begin{array}{l}
\displaystyle u_n(x,t) \rightharpoonup u(x,t),\qquad \mbox{weakly in } L^2 (\Omega \times (0,T)), \\[5pt]
\displaystyle \chi_{B_n} (x) u_n(x,t) \rightharpoonup a(x,t),\qquad \mbox{weakly in } L^2 (\Omega \times (0,T)), \\[5pt]
\displaystyle \chi_{A_n} (x) u_n(x,t) \rightharpoonup b(x,t),\qquad \mbox{weakly in } L^2 (\Omega \times (0,T)). 
\end{array}
\end{equation}
These limits verify
$$
u(x,t) = a(x,t) + b(x,t)
$$
and are characterized by
the fact that $(a,b)$ is the unique solution to the following system,
\begin{equation}\label{sys1.intro}
\left\{
\begin{array}{ll}
\displaystyle \frac{\partial a}{\partial t}\pare{x, t}=\int_{\Omega}J\pare{x, y}\pare{a\pare{y,t}-a\pare{x,t}}\, dy
\\[10pt]
\displaystyle \qquad \qquad \quad 
-\theta\pare{x}\int_{\Omega}J\pare{x, y}a\pare{y,t}dy+\pare{1-\theta\pare{x}}\int_{\Omega}J\pare{x,y}b\pare{y,t}\, dy \quad & x\in \Omega , \, t>0,\\[10pt]
\displaystyle \frac{\partial b}{\partial t}\pare{x, t}=\theta\pare{x}\int_{\Omega} J\pare{x, y}a\pare{y,t}dy-b\pare{x,t}\int_{\Omega} 
J\pare{x, y}\pare{1-\theta (y)}\, dy
\quad & x\in \Omega , \, t>0,\\[10pt]
a\pare{x, 0}=\pare{1-\theta\pare{x}}u_0\pare{x}, \quad b\pare{x, 0}=\theta\pare{x}u_0\pare{x}
\qquad & x\in \Omega.
\end{array} \right.
\end{equation}

Moreover, it holds that
the sequence of processes converges in distribution 
\begin{align}\label{convd}
\pare{X_n\pare{t}, I_n\pare{t}}\xrightarrow[n\to +\infty]{D}\pare{X\pare{t}, I\pare{t}}
\end{align}
in $D\pare{[0, T], \overline{\Omega}}\times D\pare{[0, T], \brc{1,2}}$, where the distribution of the limit $\pare{X\pare{t}, I\pare{t}}$ is characterized by 
having as probability densities $a(x,t)$ and $b(x,t)$, that is,
\begin{align}
P\Big( X\pare{t}\in  E, I(t) =1 \Big) = \int_{E} a(z,t) \, dz \quad \mbox{and} \quad 
P \Big( X\pare{t}\in  E, I(t) =2 \Big) = \int_{E} b(z,t) \, dz ,
\end{align}
for every measurable set $ E\subseteq \overline\Omega$.
\end{theorem}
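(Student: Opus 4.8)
The plan is to prove the two assertions separately and then tie them together through the uniqueness of \eqref{sys1.intro}. For the densities I would begin from the energy identity obtained by multiplying $\partial_t u_n=L_n u_n$ by $u_n$ and integrating over $\Omega$. The decisive observation is that the whole nonlocal part of $L_n$ can be written with the kernel $K_n(x,y)=J(x,y)\pare{1-\chi_{B_n}(x)\chi_{B_n}(y)}$, which is \emph{symmetric} (using that $J$ is symmetric); hence the nonlocal form equals $-\tfrac12\iint K_n(x,y)\pare{u_n(y)-u_n(x)}^2\,dy\,dx\le 0$, while the local part gives $-\tfrac12\int_{B_n}\abs{\nabla u_n}^2$ after the Neumann condition encoded in $\mc S_n$. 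Integrating in time produces, uniformly in $n$, a bound for $u_n$ in $L^\infty\pare{(0,T);L^2(\Omega)}$ together with $\int_0^T\int_{B_n}\abs{\nabla u_n}^2\le C$. By weak compactness I extract a subsequence with $u_n\rightharpoonup u$, $\chi_{A_n}u_n\rightharpoonup a$, $\chi_{B_n}u_n\rightharpoonup b$ in $L^2(\Omega\times(0,T))$; since $\chi_{A_n}+\chi_{B_n}\equiv 1$ we get $u=a+b$, the initial data being recovered from the time-continuity established next, giving $a(\cdot,0)=(1-\theta)u_0$ and $b(\cdot,0)=\theta u_0$.

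To pass to the limit in the equations for $\chi_{A_n}u_n$ and $\chi_{B_n}u_n$ tested against smooth functions, the obstruction is that each term is a product of two weakly convergent factors, e.g. $\chi_{A_n}(x)\int_\Omega J(x,y)u_n(y)\,dy$, and weak$\times$weak does not pass to the limit. The remedy is to upgrade the smoothed quantity $w_n(x,t)\defi\int_\Omega J(x,y)u_n(y,t)\,dy$ to \emph{strong} convergence in $L^2(\Omega\times(0,T))$. Since $J\in C\pare{\overline\Omega\times\overline\Omega}$ the operator $v\mapsto\int J(\cdot,y)v(y)\,dy$ is compact on $L^2(\Omega)$, and the equation supplies equicontinuity in time: for smooth $g$, $\frac{d}{dt}\int_\Omega u_n g\,dx=\int_\Omega u_n L_n g$, whose Laplacian part, integrated by parts using $\partial_\eta u_n=0$, is bounded by $C(1+\norm{\nabla u_n}_{L^2(B_n)})$, hence bounded in $L^2(0,T)$. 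An Aubin--Lions--Simon type argument then yields $w_n\to w$ strongly, so every product becomes weak$\times$strong and converges. The $A_n$ equation (no local term) gives the first line of \eqref{sys1.intro}, and the $B_n$ equation gives the second once the local term is handled below. Finally, linearity and boundedness of the operators in \eqref{sys1.intro} give uniqueness of $(a,b)$ by a Gronwall estimate on the difference of two solutions, so the whole sequence converges, not merely a subsequence.

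The main analytic obstacle is to show that the local contribution disappears, i.e. $\int_0^T\int_{B_n}\nabla u_n\cdot\nabla\varphi\to 0$ for every smooth $\varphi$; this is exactly where the second hypothesis in \eqref{cond.sets} enters. On each connected component $B_n^j$ I would subtract the mean $\bar u_n^j$ and write $\int_{B_n^j}\nabla u_n\cdot\nabla\varphi=-\int_{B_n^j}(u_n-\bar u_n^j)\Delta\varphi+\int_{\partial B_n^j}(u_n-\bar u_n^j)\partial_\eta\varphi$. The interior term is controlled by the Poincaré inequality $\norm{u_n-\bar u_n^j}_{L^2(B_n^j)}\le C\,\mbox{diam}(B_n^j)\norm{\nabla u_n}_{L^2(B_n^j)}$, so after summing in $j$ and using $\max_j\mbox{diam}(B_n^j)\to 0$ together with the energy bound it is $o(1)$. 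The delicate point — and what I expect to be the hardest step — is the boundary integral over $\partial B_n^j$, whose treatment requires trace estimates on the shrinking cells and is where the Lipschitz regularity of $\partial B_n$ must be exploited. This is the genuine homogenization mechanism: the Brownian motion, trapped in cells of vanishing diameter, leaves no trace on the macroscopic densities.

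For the convergence in distribution I would work in $D\pare{[0,T],\overline\Omega}\times D\pare{[0,T],\brc{1,2}}$ and follow the three standard steps. Tightness: as $\overline\Omega$ is compact and $\brc{1,2}$ finite, by Aldous' criterion it suffices to control the modulus of continuity, and the structure helps decisively — while $I_n=2$ the particle performs reflected Brownian motion confined to one component of $B_n$ (it may leave it only by a jump to $A_n$), so its spatial displacement is at most $\mbox{diam}(B_n^j)\to0$, the jump times forming a Poisson-type renewal process with finitely many points in $[0,T]$. Identification: any subsequential limit solves the martingale problem for the bounded jump generator
\begin{align}
\mc L F(x,1)&=\int_\Omega J(x,y)\cor{(1-\theta(y))F(y,1)+\theta(y)F(y,2)-F(x,1)}\,dy,\\
\mc L F(x,2)&=\int_\Omega J(x,y)(1-\theta(y))\cor{F(y,1)-F(x,2)}\,dy,
\end{align}
which is the backward form dual to \eqref{sys1.intro}; to obtain it I build test functions $g_n(x)=\chi_{A_n}(x)F(x,1)+\chi_{B_n}(x)F(x,2)$ adapted to the partition, lying in the domain of $L_n$ up to asymptotically negligible corrections near $\partial B_n$, and show $L_n g_n\to\mc L F$ in the averaged sense needed for the Dynkin martingales to converge, the Brownian term again vanishing by the cell argument above. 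Since $\mc L$ is bounded its martingale problem is well posed, so the limit is unique and the full sequence converges. To match, I note $P\pare{X_n(t)\in E,\,I_n(t)=1}=\int_E\chi_{A_n}(z)u_n(z,t)\,dz$ and its $I_n=2$ analogue; passing to the limit with the already established convergence of $\chi_{A_n}u_n$ and $\chi_{B_n}u_n$ identifies the one-time marginals of $(X,I)$ with $a$ and $b$, completing the proof.
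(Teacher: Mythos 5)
Your outline reproduces several of the paper's ingredients correctly (the energy estimate is exactly the paper's Lemma \ref{lema-3-2-}; the Gronwall uniqueness is Lemma \ref{lemma:eu}; your Aldous-type tightness argument via confinement to cells of vanishing diameter plus Poisson jump times is a nice, arguably simpler, alternative to the paper's martingale computation in Lemma \ref{lemma1}), but the proposal has a genuine gap at the single most important step: the disappearance of the local term. You test the equation against \emph{arbitrary} smooth $\varphi$ and try to prove $\int_0^T\int_{B_n}\nabla u_n\cdot\nabla\varphi\,dx\,dt\to 0$ by subtracting cell means, using Poincar\'e for the interior term and trace estimates for the boundary term. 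The interior term is indeed $o(1)$, but the boundary term is not: with the scaled trace and Poincar\'e inequalities one gets
$\norm{u_n-\bar{u}_n^j}_{L^2(\partial B_n^j)}\leq C\,\mbox{diam}(B_n^j)^{1/2}\norm{\nabla u_n}_{L^2(B_n^j)}$, and since $\abs{\partial B_n^j}\,\mbox{diam}(B_n^j)\sim\abs{B_n^j}$, summing over $j$ only yields
\begin{equation}
\sum_j\abs{\int_{\partial B_n^j}\pare{u_n-\bar{u}_n^j}\partial_\eta\varphi\, dS}\leq C\norm{\nabla u_n}_{L^2(B_n)},
\end{equation}
which the energy estimate bounds but does not send to zero (the total perimeter $\abs{\partial B_n}$ blows up as $n\to\infty$). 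You flag this as ``the hardest step'' and leave it unresolved, but it is not a technical refinement to be filled in later: it is the crux, and I do not see how the tools you propose can close it. The paper sidesteps the problem entirely with the approximation Lemma \ref{lema-phi}: one tests only with functions $\phi_n$ that are \emph{constant on each component} $B_n^j$, for which $\int_{B_n^j}\Delta u_n\,\phi_n=\phi_n^j\int_{\partial B_n^j}\partial_\eta u_n\,dS=0$ \emph{exactly}, by the Neumann condition — no smallness of $\nabla u_n$ is ever needed (the paper in fact discards the gradient bound). Since $\phi_n\to\phi$ strongly, this restricted class of test functions still identifies the limit system. Your stronger claim (vanishing against all smooth $\varphi$) is true only \emph{a posteriori}, as a consequence of the theorem itself.

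The same gap propagates into your probabilistic identification step: the test functions $g_n=\chi_{A_n}F(\cdot,1)+\chi_{B_n}F(\cdot,2)$ are not in the domain of ${L}_n$ (they satisfy no Neumann condition on $\partial B_n$, and corrections ``near $\partial B_n$'' are not obviously negligible precisely because $\abs{\partial B_n}\to\infty$), and $\Delta g_n=\Delta F(\cdot,2)\not\equiv 0$ on $B_n$, so killing the Brownian contribution in the Dynkin martingales again invokes the unproven cell argument. The paper instead feeds the Lemma \ref{lema-phi} approximants $f_n$ (constant on components, hence $\Delta f_n\equiv 0$ on $B_n$ and trivially Neumann) into the martingales and shows $\mathcal{L}_n f_n-\widetilde{\mathcal L}f_n\to 0$ in the averaged sense, using only the weak convergences of $a_n,b_n$ and the continuity of $J$. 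Two smaller remarks: your Aubin--Lions upgrade of $\int_\Omega J(\cdot,y)u_n(y,\cdot)\,dy$ to strong convergence is a viable substitute for the paper's mechanism (uniform convergence of $y\mapsto\int J(x,y)\chi_{B_n}(x)\phi_n(x,t)\,dx$), but as written it differentiates $J$ in the equicontinuity step, whereas the paper only assumes $J$ continuous, so a preliminary mollification of $J$ is needed; and your identification of the one-time marginals by passing to the limit in $P(X_n(t)\in E, I_n(t)=1)$ at fixed $t$ needs justification in the Skorohod topology, which the paper avoids by deriving the forward equations for the limit marginals and invoking uniqueness of measure-valued solutions (Lemma \ref{lemma:weaksys}). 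Until the local term is handled correctly, however, both halves of your argument remain open.
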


Notice that in this homogenization procedure we have two main features: in the limit the local part does not appear and
there is a nonlocal system instead of a single equation. The limit system can be
interpreted as follows: the particle that is moving according to the limit process keeps some memory
that there are two sets involved in the homogenization. Then we can look at the limit system \eqref{sys1.intro} as a system describing
the movement of a particle that has a label (white or black). The probability density of being white is given by $a(x,t)$ and 
the probability density of being black is $b(x,t)$. Remark that the total mass of the system $\int_\Omega a(x,t) dx +
\int_\Omega b(x,t) dx$ remains constant in time. The transition probabilities between labels and the jumps inside the domain
are encoded in system \eqref{sys1.intro}. For example, a particle that is black at $x$ at time $t$ becomes white and jumps 
to $y$ with a probability $\int_{\Omega} 
J\pare{x, y}\pare{1-\theta (y)}dy$ (this explains the last term in the two equations). Note that particles labeled black
does not jump and remain black at the same time (they can only jump if they change label and become white), 
while particles labeled white may also jump to
other positions keeping their label. This fact comes as a consequence that in the original process the jumps from $B_n$ to $B_n$ 
are suppressed.

Let us describe briefly the main ingredients that appear in the proofs. First, we show 
weak convergence along subsequences of $u_n$, $a_n$ and $b_n$ (these convergences comes from
a uniform bound in $L^2$). Next, we find the system that these limits verify, \eqref{sys1.intro}. This part
is delicate since we need to use an approximation lemma that provides us with test
functions whose laplacians vanish on $B_n$ (it is here 
where we use that the diameter of the components of $B_n$ goes to zero). In addition,  
passing to the limit in a term like $\chi_{B_n} (x) \chi_{A_n} (y) J(x,y)$
constitutes one of the main issues of our proofs since here we have only weak convergence of 
$\chi_{B_n} $ and $\chi_{A_n} $. Here we need to rely on the continuity of $J$ and use the fact that the product
$\chi_{B_n} (x) \chi_{A_n} (y) J(x,y)$ involves two different variables, $x$ and $y$. Finally, we show
uniqueness of the limit by proving uniqueness of solutions to the limit system \eqref{sys1.intro}.
To show convergence of the densities we rely on pure analysis methods (no probabilistic arguments
are needed here).

The passage to the limit in the processes $X_n$ is delicate since the limit of the corresponding densities
is given by a system. Hence, we need to consider here the pair $\pare{X_n , I_n}$ with 
the extra variable that takes into account when the particle is in $A_n$ or in $B_n$ and then prove
tightness of the pair to obtain a limit in distribution. To characterize this limit and relate it to the limit
of the densities is the final step of the proof. Here we use probabilistic arguments together with similar ideas to the ones used
in passing to the limit in the densities to obtain the convergence of the different terms
that appear. Here, we need to show uniqueness of evolution problems in the space
of measures to show, first, that $u_n(x,t)$ is the density of $X_n(t)$ and, finally,
that $(a(x,t),b(x,t))$ are the densities associated to the limit process $(X(t),I(t))$.

The evolution problem \eqref{evol.intro} is the gradient flow in $L^2(\Omega)$ of the energy
$$
E_n (u) = \frac14 \int_{B_n} |\nabla u|^2 + \frac14 \int_{A_n} \int_{A_n}
J(x,y) |u(y) - u(x)|^2 dy dx + \frac12 \int_{A_n} \int_{B_n}
J(x,y) |u(y) - u(x)|^2 dy dx.
$$
Therefore, one may be tempted to use convergence of the energies to obtain 
convergence of the densities (using, for example, Mosco convergence, see \cite{Mosco} and \cite{Brezis-Pazy}).
We are not following this approach here since in the limit we have a system for which
it is not clear how a limit energy functional looks like (if there is any).

Let us also briefly comment on the main hypothesis, condition \eqref{cond.sets}.
As we mentioned before, the fact that the limit $\theta (x)$ is assumed to be strictly between $0$ and $1$
reflects the idea that the local and nonlocal regions are mixed in the whole 
reference domain $\Omega$. The condition that says that the diameter of the
connected components of $B_n$ goes to zero is crucial for our result since it implies
that the Brownian motion part of the evolution gets trapped in very small sets as $n\to \infty$
(recall that the Brownian motion is supplemented with a reflexion on the boundary of
$B_n$).
In the final section we will present an example (in a square we just take narrow strips parallel to one of the sides
as the partition $A_n$, $B_n$) that shows that this condition is necessary
in order to obtain our main result, Theorem \ref{teo.main.intro}.

Homogenization for PDEs is by now a classical subject that originated in the study of the behaviour
of the solutions to elliptic and parabolic local equations with highly oscillatory coefficients (periodic homogenization). 
We refer to
\cite{Ben,TT} as general references for the subject.
For other kind of homogenization (using PDEs techniques without mixing local and nonlocal processes) for pure nonlocal problems we
refer to \cite{marc1,marc2,marc3}. For homogenization results for singular 
kernels (but without the local part) we refer to \cite{Ca,sw2,Wa} and references therein. We emphasize that those references
deal with homogenization in the coefficients involved in the equation. For random homogenization of an obstacle problem we refer to \cite{caffa2}. 
Here we deal with an homogenization problem that is different in nature with the ones treated in the previously mentioned references
as we homogenize mixing two operators/processes that are different in nature (local/nonlocal).

The paper is organized as follows: In Section \ref{sect-lemma} we include a technical lemma
that will be the key to obtain that the Laplacian disappears in the limit. In Section \ref{sec-teo-main} 
we prove Theorem \ref{teo.main.intro} (first we deal with the limit of the densities and next we
compute the limit of the processes); finally,
in Section \ref{sect-generaliz} we include some examples, including the 1-d case  $\overline\Omega=[0,1]$,
a chessboard configuration in $\overline\Omega = [0,1] \times [0,1]$ and a thin strips configuration in 
$\overline\Omega = [0,1] \times [0,1]$
that shows that the condition involving the diameter on the connected components of $B_n$ is needed in order to obtain
a limit in which the local part disappears in the limit.

\section{An approximation lemma} \label{sect-lemma}

In this section we prove a technical lemma that helps us to modify a function $\phi (x,t)$ in order to approximate it with
functions $\phi_n(x,t)$ that are constant in each of the components of $B_n$, and hence we have
$$
\chi_{B_n}\pare{x} \Delta \phi_n \pare{x,t} \equiv 0.
$$
This approximation procedure will be used to prove that the local part of the process disappears in the homogenization limit.
We will use that, for every $n\in \bb N$ and $x\in \overline\Omega$, 
we have that $x\in A_n$ or there exists $j\in\brc{1,\cdots, N}$ with $x\in B_n^j$. 
Here and in what follows we will use the notation $C^{1}\pare{[0, T], C(\overline{\Omega})}$ to denote the space
of functions that are continuously differentiable in time and continuous in space; also $L^p \pare{0, T : L^q({\Omega})}$ denotes the space of functions that 
are in $L^p$ in time with values in $L^q(\Omega)$.

\begin{lemma} \label{lema-phi}
Given $\phi :[0,T] \times \overline\Omega \mapsto \mathbb{R}$, let
\begin{equation} \label{def.clave}
\phi_n\pare{x, t}=
\left\{
\begin{array}{ll} \displaystyle 
\kint_{B_n^j}\phi(z, t)dz &\text{if }x\in B_n^j,\; t\geq 0,\\[10pt]
\phi\pare{x, t} &\text{if }x\in A_n,\; t\geq 0.
\end{array}
\right.
\end{equation}

\noindent
If $\phi\pare{x, t}\in C^{1}\pare{[0, T], C(\overline{\Omega})}$, then 
$$
\phi_n (\cdot,t) \in C(A_n) \cap C^2 (B_n),
$$
and it holds that, 
$$\phi_n\xrightarrow[n\to +\infty]{}\phi, \qquad \mbox{and} \qquad 
\frac{\partial \phi_n}{\partial t} \xrightarrow[n\to +\infty]{} \frac{\partial \phi}{\partial t}, \qquad \mbox{ uniformly in } [0, T]\times \overline{\Omega}.$$
Therefore,
$$\phi_n\xrightarrow[n\to +\infty]{}\phi \qquad \mbox{and} 
\qquad \frac{\partial \phi_n}{\partial t}  \xrightarrow[n\to +\infty]{}\frac{\partial \phi}{\partial t}, \qquad \mbox{strongly in } L^2\pare{0, T:L^2\pare{\Omega}}.
$$
\end{lemma}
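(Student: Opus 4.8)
The plan is to dispatch the two regularity claims quickly and to concentrate the real work on the uniform convergence, which is where the diameter hypothesis in \eqref{cond.sets} is the decisive ingredient. For the regularity, fix $t\in[0,T]$. On each connected component $B_n^j$ the function $\phi_n(\cdot,t)$ is the constant $\kint_{B_n^j}\phi(z,t)\,dz$, so it is $C^\infty$ there with all spatial derivatives vanishing; in particular $\phi_n(\cdot,t)\in C^2(B_n)$ and $\chi_{B_n}\Delta\phi_n\equiv 0$, which is exactly the property the lemma is engineered to provide. On $A_n$ we have $\phi_n=\phi$, and since $\phi(\cdot,t)\in C(\overline\Omega)$ this gives $\phi_n(\cdot,t)\in C(A_n)$.

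For the convergence, the heart of the matter is the following pointwise bound. Fix $t\in[0,T]$ and $x\in B_n^j$; writing the constant value of $\phi_n$ as an average and using that $x$ itself lies in $B_n^j$,
$$
\abs{\phi_n(x,t)-\phi(x,t)}=\abs{\kint_{B_n^j}\pare{\phi(z,t)-\phi(x,t)}\,dz}\le\kint_{B_n^j}\abs{\phi(z,t)-\phi(x,t)}\,dz.
$$
Since $x,z\in B_n^j$ we have $\abs{z-x}\le\mbox{diam}(B_n^j)\le\max_j\mbox{diam}(B_n^j)\fide\delta_n$. Because $\overline\Omega$ is compact and $\phi\in C^{1}\pare{[0,T],C(\overline\Omega)}$ is in particular jointly continuous on $[0,T]\times\overline\Omega$, it is uniformly continuous there, so for every $\varepsilon>0$ there is $\delta>0$ with $\abs{\phi(z,t)-\phi(x,t)}<\varepsilon$ whenever $\abs{z-x}<\delta$, uniformly in $t$. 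By \eqref{cond.sets} we have $\delta_n\to0$, so for $n$ large the last integral is $<\varepsilon$ for all such $x$ and $t$; on $A_n$ the difference vanishes identically. Taking the supremum over $x\in\overline\Omega$ and $t\in[0,T]$ yields $\phi_n\to\phi$ uniformly.

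For the time derivative the key remark is that, for each fixed $j$ and $n$, the averaging map $v\mapsto\kint_{B_n^j}v$ is a bounded linear functional on $C(\overline\Omega)$, and it therefore commutes with the $C^1$ dependence on $t$; consequently $\partial_t\phi_n$ is produced from $\partial_t\phi$ by exactly the same recipe \eqref{def.clave}. Since $\partial_t\phi$ is again jointly continuous, hence uniformly continuous, on the compact set $[0,T]\times\overline\Omega$, the estimate of the previous paragraph applies verbatim and gives $\partial_t\phi_n\to\partial_t\phi$ uniformly. Finally, uniform convergence on $[0,T]\times\overline\Omega$ passes to $L^2\pare{0,T:L^2\pare{\Omega}}$ at no cost, since $\abs\Omega<\infty$ and $T<\infty$ imply
$$
\norm{\phi_n-\phi}_{L^2\pare{0,T:L^2\pare{\Omega}}}^2\le T\,\abs\Omega\,\norm{\phi_n-\phi}_{L^\infty\pare{[0,T]\times\overline\Omega}}^2,
$$
and likewise for $\partial_t\phi_n-\partial_t\phi$.

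The main obstacle is the pointwise bound of the second paragraph: this is precisely where the hypothesis $\max_j\mbox{diam}(B_n^j)\to0$ is indispensable, as it is what forces the average of $\phi$ over each local component to approximate its pointwise values uniformly, and it is the analytic content behind the statement that the Laplacian disappears in the limit. The remaining ingredients, namely the piecewise-constant regularity, commuting $\partial_t$ past the average, and upgrading uniform convergence to $L^2$, are routine.
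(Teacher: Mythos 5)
Your proof is correct and follows essentially the same route as the paper: the same pointwise estimate $\abs{\phi_n(x,t)-\phi(x,t)}\le\kint_{B_n^j}\abs{\phi(z,t)-\phi(x,t)}\,dz$ controlled by uniform continuity of $\phi$ together with the hypothesis $\max_j \mathrm{diam}(B_n^j)\to 0$, the same observation that $\partial_t\phi_n$ is built from $\partial_t\phi$ by the identical averaging recipe, and the same passage from uniform to $L^2$ convergence. Your explicit justification of the regularity claim and of the bound $\norm{\phi_n-\phi}_{L^2(0,T:L^2(\Omega))}^2\le T\abs{\Omega}\norm{\phi_n-\phi}_{L^\infty}^2$ merely spells out steps the paper leaves implicit.
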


\begin{proof}
Our goal is to prove that 
\begin{align}\label{ucv}
\norm{\phi-\phi_n}_{L^\infty ([0,T] \times \overline{\Omega})} \xrightarrow[n\to +\infty]{}0.
\end{align}

First of all, observe that, since the function $\phi$ is uniformly continuous in $[0, T] \times\overline{\Omega}$,
for every $\epsilon>0$ there exists $\delta_{\epsilon}$ such that 
\begin{align}\label{acf}
\abs{\phi(x, t)-\phi(y, t)}<\epsilon, \qquad \forall t\geq 0,\; \forall x,y :\abs{x-y}<\delta_\epsilon.
\end{align}
Fix $\epsilon>0$, $t\geq 0$ and take $x\in B_n$, consequently there exists $j\in\brc{1, \ldots, N}$ such that $x\in B_n^j$. Therefore, we have
\begin{equation}\label{pl1.kk}
\abs{\phi_n\pare{x, t}-\phi\pare{x, t}} \displaystyle =\abs{\kint_{B^n_j}\phi(z, t)dz-\phi(x, t)}
\displaystyle \leq \kint_{B^n_j}\abs{\phi(z, t)-\phi(x, t)}dz
\leq \epsilon,
\end{equation}
for every $n$ such that $\max_j\brc{\mbox{diam} (B_n^j)} < \delta_{\epsilon}$  (here we are using our 
hypothesis \eqref{cond.sets}). 
While if $x\in  A_n$, we have
\begin{align}\label{pl2.kk}
\abs{\phi_n\pare{x, t}-\phi \pare{x, t}} =0,
\end{align}
for all $n\in\bb N$.

From \eqref{pl1.kk} and \eqref{pl2.kk} we get that there exists $n_0\pare{\epsilon}$ such that $\forall n\geq n_0\pare{\epsilon}$ 
\begin{align}\label{moni}
\norm{\phi_n-\phi}_{L^\infty ([0,T] \times \overline{\Omega})}<\epsilon
\end{align}
and this allows to conclude our first statement. The uniform convergence of the time derivatives can be proved analogously just by observing that
$$
\frac{\partial \phi_n}{\partial t}\pare{x, t}=
\left\{
\begin{array}{ll} \displaystyle 
\kint_{B_n^j} \frac{\partial \phi}{\partial t} (z, t)dz &\text{if }x\in B_n^j, \, t\geq 0\\[10pt]
\phi_t \pare{x, t} &\text{if }x\in A_n, \, t\geq 0,
\end{array}
\right.
$$
and then the same arguments as before can be applied.
\end{proof}

\begin{remark} {\rm By definition the sequence of functions $\pare{\phi_n}_{n\in\bb N}$ is such that 
$$\sup_{n\in \bb N}\norm{\phi_n}_{L^\infty ([0,T]\times \overline{\Omega})} \leq \norm{\phi}_{L^\infty ([0,T]\times \overline{\Omega})}.$$
}
\end{remark}

\begin{remark} \label{rem.cont}{\rm If we assume, in addition to our previous conditions on the configurations
$\overline\Omega = A_n \cup B_n$, \eqref{cond.sets}, that the connected components of $B_n$ are strictly separated, that is,
for every $n$, there exists $\delta_n>0$ such that,
\begin{equation} \label{extra}
\inf_{i\neq j} \mbox{dist} (B^n_j, B^n_i) = \delta_n >0,
\end{equation}
then we can modify our approximations in such a way that $$\phi_n\in C(\overline{\Omega}).$$
In fact, let us briefly sketch the arguments. First, we enlarge a little the set $B_n$, taking $\widetilde{B_n} = B_n + B_{\eta_n}(0)$ with $\eta_n \ll \delta_n$.
Notice that, from the extra condition \eqref{extra}, we have the same number of 
connected components in $\widetilde{B_n}$ (that we call $\widetilde{B_n}^j$) and in $B_n$.
Then, we let
\begin{equation} \label{def.clave.99}
\psi_n\pare{x, t}=
\left\{
\begin{array}{ll} \displaystyle 
\kint_{\widetilde{B_n}^j}\phi(z, t)dz &\text{if }x\in \widetilde{B_n}^j, t\geq 0,\\[10pt]
\phi\pare{x, t} &\text{if }x\in A_n \setminus \left( \cup_j \widetilde{B_n}^j \right), \  t\geq 0.
\end{array}
\right.
\end{equation}
This function $\psi_n$ is not necessarily continuous but, from our previous arguments, is uniformly close to $\phi$.
Finally, we just take our approximating sequence to be
$$
\phi_n (x,t) = \psi_n  \ast_x \rho_{\epsilon_n} (x,t)
$$
being $\rho_{\epsilon_n}(x)$ a smooth mollifier with $\epsilon_n\ll \eta_n$.
}
\end{remark}

\section{Proof of Theorem \ref{teo.main.intro}} \label{sec-teo-main}

\subsection{Convergence of the densities} Now we look for the limit of the solution to \eqref{evol.intro}.
That is, our goal is to pass to the limit in $\{u_n\pare{x, t}\}_n$ being $u_n$ the solution of the following evolution problem:
\begin{align}\label{sys1}
\begin{cases}
\displaystyle \frac{\partial u_n}{\partial t} \pare{x, t}=&
\displaystyle \chi_{A_n}\pare{x}\int_{\Omega}J\pare{x,y}\pare{u_n\pare{y, t}-u_n\pare{x, t}}dy\\[10pt]
& \displaystyle  +\chi_{B_n}\pare{x}\int_{\Omega} \chi_{A_n}\pare{y}J\pare{x,y}\pare{u_n\pare{y, t}-u_n\pare{x, t}}dy  +\frac{1}{2}\chi_{B_n}\pare{x} \Delta \pare{u_n} \pare{x, t}, \quad x\in \Omega, \, t>0,\\[10pt]
& \hspace{-68pt} \displaystyle \frac{\partial u_n}{\partial \eta} \pare{x, t}=0, \qquad x \in \partial B_n, \, t>0,\\[10pt]
&\hspace{-68pt}u_n\pare{x, 0}=u_0\pare{x}, \qquad x \in \Omega.
\end{cases}
\end{align}

It is proved in \cite{GQR} that there exists a unique solution $u_n\pare{x, t}$ of system \eqref{sys1}. 
The following lemma shows that the sequence $\{u_n\}_n$ is uniformly bounded.

\begin{lemma} \label{lema-3-2-}
There exists a constant $C$ such that
\begin{align}
\norm{u_n}_{L^\infty \pare{0, T: L^2(\Omega)}}\leq C.
\end{align}
\end{lemma}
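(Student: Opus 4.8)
The plan is to run the standard energy estimate: multiply the equation in \eqref{sys1} by $u_n$ itself and integrate over $\Omega$. Writing $\pic{f, g}=\int_\Omega f g\,dx$ for the $L^2$ pairing, this gives
$$
\frac12\frac{d}{dt}\norm{u_n(\cdot,t)}_{L^2(\Omega)}^2 = \pic{u_n, {L}_n u_n}.
$$
The whole point is then to show that the quadratic form $f\mapsto \pic{f,{L}_n f}$ is nonpositive; in fact I expect to recover exactly $\pic{f,{L}_n f}=-2E_n(f)\le 0$, where $E_n$ is the energy displayed in the introduction. Granting this, one immediately gets $\frac{d}{dt}\norm{u_n(\cdot,t)}_{L^2(\Omega)}^2\le 0$, hence $\norm{u_n(\cdot,t)}_{L^2(\Omega)}\le \norm{u_0}_{L^2(\Omega)}$ for every $t\in[0,T]$ and every $n$, which is the claimed bound with $C=\norm{u_0}_{L^2(\Omega)}$.

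To establish $\pic{f,{L}_n f}\le 0$ I would treat the three pieces of ${L}_n$ separately. For the local piece, $\frac12\int_\Omega \chi_{B_n}(x) f\,\Delta f\,dx = -\frac12\int_{B_n}\abs{\nabla f}^2\,dx$ after integrating by parts, the boundary term vanishing because of the zero Neumann condition $\frac{\partial f}{\partial\eta}=0$ on $\partial B_n$ that is encoded in the domain of ${L}_n$. For the nonlocal pieces I would split the inner integrals according to whether the target $y$ lies in $A_n$ or $B_n$, producing three bilinear terms: a pure $A_n\times A_n$ term, and the two coupling terms $A_n\to B_n$ and $B_n\to A_n$. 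The pure term, over the symmetric region $A_n\times A_n$, symmetrizes in the usual way using $J(x,y)=J(y,x)$ into $-\frac12\int_{A_n}\int_{A_n}J(x,y)\pare{f(y)-f(x)}^2\,dy\,dx\le 0$.

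The step I expect to be the main obstacle is matching up the two coupling terms, since individually neither is a sign-definite square. Here the idea is that the $B_n\to A_n$ contribution $\int\int \chi_{B_n}(x)\chi_{A_n}(y)J(x,y)f(x)\pare{f(y)-f(x)}\,dy\,dx$, after relabeling $x\leftrightarrow y$ and invoking symmetry of $J$, becomes $\int\int \chi_{A_n}(x)\chi_{B_n}(y)J(x,y)f(y)\pare{f(x)-f(y)}\,dy\,dx$; adding it to the $A_n\to B_n$ term collapses the integrand $f(x)\pare{f(y)-f(x)}+f(y)\pare{f(x)-f(y)}$ into $-\pare{f(x)-f(y)}^2$, yielding $-\int_{A_n}\int_{B_n}J(x,y)\pare{f(x)-f(y)}^2\,dy\,dx\le 0$. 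Collecting the three nonpositive contributions gives $\pic{f,{L}_n f}=-2E_n(f)$ and closes the estimate.

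One routine point to dispatch, which I would invoke from the well-posedness theory of \cite{GQR} rather than reprove, is that $u_n$ is regular enough in time and space (and $u_n(\cdot,t)$ lies in the domain $\mc S_n$ of ${L}_n$) to justify differentiating $\norm{u_n(\cdot,t)}_{L^2(\Omega)}^2$ in $t$ and performing the integration by parts; if needed this is handled by a standard approximation/density argument. The bound obtained is in fact sharper than required, being uniform in both $n$ and $t\in[0,T]$, and it is precisely this uniform $L^2$ control that will later furnish the weak limits $u$, $a$, $b$ along subsequences in \eqref{limite.debil.u.intro}.
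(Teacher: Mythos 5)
Your proposal is correct and follows essentially the same route as the paper: multiply by $u_n$, integrate, use the symmetry of $J$ to symmetrize the nonlocal terms into nonpositive squares, and integrate by parts with the Neumann condition to handle the Laplacian, concluding that $t\mapsto\norm{u_n(\cdot,t)}_{L^2(\Omega)}$ is nonincreasing. Your treatment of the two coupling terms (relabeling $x\leftrightarrow y$ and collapsing to $-\pare{f(x)-f(y)}^2$) is exactly the symmetrization the paper invokes, just written out in more detail.
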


\begin{proof}
To prove the uniform bound we just multiply by $u_n$ both sides of \eqref{sys1} and integrate in $\Omega$ and in $[0,T]$ to obtain
\begin{align}\label{wq}
\begin{array}{l}
\displaystyle 
\frac12\int_\Omega (u_n)^2 (x,T) dx -  
\frac12\int_\Omega (u_0)^2 (x) dx = \int_0^T \int_\Omega   {L}_n u_n(x,t) u_n (x,t) dx dt  \\[10pt]
\qquad \displaystyle 
= \int_0^T \int_\Omega \chi_{A_n}\pare{x}\int_\Omega J\pare{x,y}\pare{u_n\pare{y,t}-u_n\pare{x,t}}dy u_n (x,t) 
dx dt 
\\[10pt]
\qquad \displaystyle 
\qquad + \int_0^T \int_\Omega \chi_{B_n}\pare{x}\int_\Omega \chi_{A_n}\pare{y}J\pare{x,y}\pare{u_n\pare{y,t}- u_n\pare{x,t}}dy
u_n (x,t) 
dx dt 
\\[10pt]
\qquad \displaystyle 
\qquad
+ \int_0^T \int_\Omega \chi_{B_n}\pare{x} \Delta u_n \pare{x,t }  u_n (x,t) 
dx dt 
\\[10pt]
\qquad \displaystyle 
\leq  - \int_0^T \iint_{\Omega \times \Omega} (1- \chi_{B_n}\pare{x}
\chi_{B_n}\pare{y} ) J(x,y) \pare{u_n\pare{y,t}-u_n\pare{x,t}}^2 dx dy dt \leq 0,
\end{array}
\end{align}
and hence the $L^2-$norm of the solution is decreasing in time and the result follows. To get the last step of \eqref{wq} we used that the function $J$ is symmetric and that
\begin{align}
\int_0^T \int_\Omega \chi_{B_n}\pare{x} \Delta u_n \pare{x,t }  u_n (x,t) 
dx dt =-\int_0^T \int_\Omega \chi_{B_n}\pare{x} | \nabla u_n \pare{x,t }|^2
dx dt \leq 0
\end{align}
that holds due to the fact that $\frac{\partial u_n}{\partial \eta} \pare{x, t}=0$ for $x \in \partial B_n, \, t>0$.
\end{proof}

Now, we let 
$$a_n\pare{x, t}=\chi_{A_n}u_n\pare{x,t}\qquad \mbox{ and }\qquad b_n\pare{x,t}=\chi_{B_n}u_n\pare{x,t}.$$
Since $\overline\Omega = A_n \cup B_n$ we have
$$
u_n (x,t)= \chi_{A_n} (x) u_n(x,t) + \chi_{B_n} (x) u_n(x,t) = a_n (x,t)+b_n (x,t).
$$ 

Our next task is to show that there is a limit, as $n\to \infty$ (along a subsequence), that is a solution to the limit system 
\eqref{sys1.intro}.

\begin{theorem}\label{th32}
It holds that $$u_n\xrightarrow[n\to +\infty]{}u, \qquad a_n\xrightarrow[n\to +\infty]{}a \qquad \mbox{ and  } \qquad
b_n\xrightarrow[n\to +\infty]{}b$$ weakly in $L^2\pare{\Omega \times (0,T)}$ where these limits verify
$$
u(x,t) = a(x,t) + b(x,t),
$$
and the pair $\pare{a\pare{x, t}, b\pare{x, t}}$ is a solution to the system
\eqref{sys1.intro}.
\end{theorem}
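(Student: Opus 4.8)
The plan is to combine the uniform bound of Lemma \ref{lema-3-2-} with the approximation device of Lemma \ref{lema-phi}, and to pass to the limit in \emph{separate} weak formulations for $a_n$ and $b_n$ rather than in the single equation \eqref{sys1}. Since $\norm{u_n}_{L^\infty(0,T:L^2(\Omega))}\le C$ and $\abs{a_n},\abs{b_n}\le\abs{u_n}$, the three sequences are bounded in $L^2(\Omega\times(0,T))$; extracting a common subsequence (not relabelled) we obtain $u_n\rightharpoonup u$, $a_n\rightharpoonup a$, $b_n\rightharpoonup b$ weakly in $L^2(\Omega\times(0,T))$, and passing to the weak limit in the pointwise identity $u_n=a_n+b_n$ gives $u=a+b$. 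It remains to identify the equations for $a$ and $b$.

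To isolate the two equations I multiply \eqref{sys1} by $\chi_{A_n}$ and by $\chi_{B_n}$, using $\chi_{A_n}\chi_{B_n}=0$, $\chi_{A_n}+\chi_{B_n}=1$ and $\int_\Omega J(x,y)\,dy=1$. The first product annihilates the Laplacian term and yields
\[
\frac{\partial a_n}{\partial t}(x,t)=\chi_{A_n}(x)\int_\Omega J(x,y)u_n(y,t)\,dy-a_n(x,t),
\]
while the second retains it,
\[
\frac{\partial b_n}{\partial t}(x,t)=\chi_{B_n}(x)\int_\Omega\chi_{A_n}(y)J(x,y)\bigl(u_n(y,t)-u_n(x,t)\bigr)\,dy+\tfrac12\chi_{B_n}(x)\Delta u_n(x,t).
\]
I test the first identity against an arbitrary $\phi\in C^1([0,T],C(\overline\Omega))$ with $\phi(\cdot,T)=0$ and integrate by parts in time; the $t=0$ boundary term is $\int_\Omega\chi_{A_n}u_0\,\phi(\cdot,0)\to\int_\Omega(1-\theta)u_0\,\phi(\cdot,0)$, which encodes the initial datum $a(\cdot,0)=(1-\theta)u_0$. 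The only delicate term is $\int_0^T\!\!\int_\Omega\chi_{A_n}(x)\phi(x,t)\int_\Omega J(x,y)u_n(y,t)\,dy\,dx\,dt$; by Fubini I rewrite it as $\int_0^T\!\!\int_\Omega u_n(y,t)\,r_n(y,t)\,dy\,dt$ with $r_n(y,t)=\int_\Omega\chi_{A_n}(x)\phi(x,t)J(x,y)\,dx$, and I claim $r_n\to r:=\int_\Omega(1-\theta(x))\phi(x,t)J(x,y)\,dx$ \emph{uniformly} in $(y,t)$. Granting this, it is a weak$\times$strong pairing converging to $\int_0^T\!\!\int_\Omega(1-\theta(x))\phi(x,t)\int_\Omega J(x,y)u(y,t)\,dy\,dx\,dt$; inserting $u=a+b$ and simplifying with $\int_\Omega J(x,y)\,dy=1$ gives exactly the weak form of the first equation of \eqref{sys1.intro}.

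The uniform-convergence claim is where the continuity of $J$ enters, and it is the heart of the argument. The family $\mathcal{F}=\{x\mapsto\phi(x,t)J(x,y):(y,t)\in\overline\Omega\times[0,T]\}$ is the continuous image of the compact set $\overline\Omega\times[0,T]$ in $C(\overline\Omega)$, hence compact in $C(\overline\Omega)$ and precompact in $L^1(\Omega)$. Since $\chi_{A_n}-(1-\theta)\rightharpoonup0$ weak-$*$ in $L^\infty=(L^1)^*$ and is uniformly bounded, covering $\mathcal{F}$ by finitely many $L^1$-balls shows $\langle\chi_{A_n}-(1-\theta),f\rangle\to0$ uniformly over $f\in\mathcal{F}$, which is precisely $r_n\to r$ uniformly. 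This same mechanism handles every term in which $x$ and $y$ are decoupled through $J$; it is exactly the point flagged in the introduction, namely passing to the limit in $\chi_{B_n}(x)\chi_{A_n}(y)J(x,y)$ when only weak convergence of the indicators is available.

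For the equation for $b$ the obstruction is the Laplacian, and here Lemma \ref{lema-phi} is indispensable. I test the $b_n$ identity against the modified function $\phi_n$ associated with $\phi$, which is constant on each component $B_n^j$; consequently $\nabla\phi_n\equiv0$ on $B_n$, and Green's identity together with the Neumann condition $\partial u_n/\partial\eta=0$ on $\partial B_n$ gives $\int_{B_n}\phi_n\Delta u_n=-\int_{B_n}\nabla\phi_n\cdot\nabla u_n=0$, so the local term disappears identically. The time derivative is controlled using $\phi_n\to\phi$ and $\partial_t\phi_n\to\partial_t\phi$ uniformly (Lemma \ref{lema-phi}) and $b_n(\cdot,0)=\chi_{B_n}u_0\rightharpoonup\theta u_0$, producing $b(\cdot,0)=\theta u_0$. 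The two surviving nonlocal terms are treated as above: after a Fubini the coefficient integrated against $J$, $\phi_n$ and either $\chi_{A_n}$ or $\chi_{B_n}$ converges uniformly (combining $\phi_n\to\phi$ uniformly with the compact-family argument), so each is weak$\times$strong and the limit is the weak form of the second equation of \eqref{sys1.intro}. Finally, since the right-hand sides of both limit equations are bounded linear operators applied to $a,b\in L^2$, the identities hold in $L^2(\Omega)$ for a.e.\ $t$ with the stated initial traces, so $(a,b)$ solves \eqref{sys1.intro}. I expect the two main obstacles to be exactly the uniform-convergence claim that decouples the two weakly convergent indicators through the continuity of $J$, and the careful bookkeeping that makes the Laplacian vanish upon testing against $\phi_n$.
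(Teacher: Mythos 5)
Your proposal is correct and follows essentially the same route as the paper: the uniform $L^2$ bound from Lemma \ref{lema-3-2-} giving weak subsequential limits, testing against the functions $\phi_n$ of Lemma \ref{lema-phi} (constant on the components of $B_n$) so that the Laplacian term dies via Green's identity and the Neumann condition, and decoupling the $x$ and $y$ variables through Fubini plus the continuity of $J$ to pass to the limit in products of weakly convergent indicators. The only differences are cosmetic refinements: you test the $a_n$-identity with a plain $\phi$ (legitimate, since $\chi_{A_n}$ already annihilates the Laplacian), and your compact-family argument in $C(\overline\Omega)\hookrightarrow L^1(\Omega)$ supplies a detailed justification of the uniform convergence of $h_n$ that the paper simply asserts from the continuity of $J$.
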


\begin{proof}
From Lemma \ref{lema-3-2-} we have that 
\begin{align}
\norm{u_n}_{L^\infty \pare{0, T: L^2(\Omega)}};\  \norm{a_n}_{L^\infty \pare{0, T: L^2(\Omega)}};\  \norm{b_n}_{L^\infty \pare{0, T: L^2(\Omega)}}\leq C.
\end{align}
Therefore, the sequences $\{u_n\}_n$, $\{a_n\}_n$ and $\{b_n\}_n$ are bounded in $L^2$ uniformly in $n$ and then 
we can extract a weakly convergent subsequence of $\{u_n\}_n$, $\{a_n\}_n$ and $\{b_n\}_n$ that for simplicity of notation we index again by $n$. 
We call $u$, $a$ and $b$ the weak limits of the subsequences $\{u_n\}_n$, $\{a_n\}_n$ and $\{b_n\}_n$ respectively.
From 
$$
u_n(x,t)  =  a_n(x,t) + b_n(x,t),
$$
we immediately get
$$
u(x,t) = a(x,t) + b(x,t).
$$

Take a smooth function $\phi$ such that $\phi(\cdot, T)\equiv 0$ and approximate it with
functions $\phi_n$ that are constant in each of the components of $B_n$ as was shown in Lemma \ref{lema-phi}.

Consider now equation \eqref{sys1} and multiply both sides by $\chi_{B_n}\pare{x}\phi_n\pare{x, t}$ and then integrate respect to the variables $x$
and $t$. Since by construction $\phi_n(\cdot, T)\equiv 0$, integrating by parts we obtain
\begin{align}
-\int_0^T\int_\Omega &\frac{\partial \phi_n}{\partial t}(x, t)b_n\pare{x, t}\, dxdt+\int_\Omega \chi_{B_n}\pare{x}u_0\pare{x}\phi_n\pare{x,0}\, dx\\[10pt]
&=\int_0^T\int_\Omega \int_\Omega \chi_{B_n}\pare{x}\chi_{A_n}\pare{y}J\pare{x,y}\pare{u_n\pare{y, t}-u_n\pare{x, t}}\phi_n\pare{x, t}\, dydxdt
\\[10pt]
&\hspace{+15pt}+\frac{1}{2}\int_0^T\int_\Omega \chi_{B_n}\pare{x}\Delta  u_n\pare{x, t} \phi_n\pare{x, t}\, dx dt\\[10pt]
&=\int_0^T\int_\Omega \int_\Omega J\pare{x, y}\chi_{B_n}\pare{x}a_n\pare{y, t}\phi_n\pare{x, t}\, dydxdt\\[10pt]
&\hspace{+15pt}-\int_0^T\int_\Omega \int_\Omega J\pare{x, y}\chi_{A_n}\pare{y}b_n\pare{x, t}\phi_n\pare{x, t}\, dydxdt.
\end{align}
Since $$\chi_{A_n}\pare{\cdot}\xrightarrow[n\to +\infty]{}1-\theta\pare{\cdot} \qquad \mbox{ and } \qquad \chi_{B_n}\pare{\cdot}\xrightarrow[n\to +\infty]{}\theta\pare{\cdot}$$ weakly in $L^2\pare{\Omega \times (0,T)}$, we obtain the following limits (here we use the strong convergences proved in Lemma \ref{lema-phi})
$$
\int_0^T\int_\Omega \frac{\partial \phi_n}{\partial t} \pare{x, t}b_n\pare{x, t}\, dxdt
\xrightarrow[n\to +\infty]{}\int_0^T\int_\Omega \frac{\partial \phi}{\partial t} \pare{x, t}b\pare{x, t}\, dxdt,
$$
$$
\int_\Omega \chi_{B_n}\pare{x}u_0\pare{x}\phi_n\pare{x,0}\, dx\xrightarrow[n\to +\infty]{}\int_0^1\theta\pare{x}u_0(x)\phi\pare{x,0}\, dx.
$$
Now, as we assumed that $J(x,y)$ is continuous, we have that
$$
h_n(y) = \int_\Omega  J(x,y) \chi_{B_n} (x) \phi_n\pare{x, t} \, dx   
\xrightarrow[n\to +\infty]{} \int_\Omega  J(x,y) \theta (x) \phi \pare{x, t} \, dx 
$$
uniformly in $y$. Therefore, we get
$$
\int_0^T\int_\Omega \int_\Omega J\pare{x, y}\chi_{B_n}\pare{x}a_n\pare{y, t}\phi_n\pare{x, t}\, dydxdt\xrightarrow[n\to +\infty]{}\int_0^T\int_\Omega 
\int_\Omega J\pare{x, y}\theta\pare{x}a\pare{y, t}\phi\pare{x, t}\, dydxdt,
$$
and, arguing similarly,
$$
\begin{array}{l}
\displaystyle
\int_0^T\int_\Omega \int_\Omega J\pare{x, y}\chi_{A_n}\pare{y}b_n\pare{x, t}\phi_n\pare{x, t}\, dydxdt 
\xrightarrow[n\to +\infty]{}
\int_0^T\int_\Omega  \int_\Omega J\pare{x, y}\pare{1-\theta\pare{y}}b\pare{x, t}\phi\pare{x, t}\, dydxdt.
\end{array}
$$

Collecting all these limits we conclude that
\begin{align}
-\int_0^T\int_\Omega & \frac{\partial{\phi}}{\partial t} \pare{x, t}b \pare{x, t}dxdt+\int_\Omega \theta \pare{x}u_0\pare{x}\phi \pare{x,0}\, dx\\[10pt]
&=\int_0^T\int_\Omega \int_\Omega J\pare{x, y}\theta\pare{x}a \pare{y, t}\phi \pare{x, t}\, dydxdt\
-\int_0^T\int_\Omega \int_\Omega J\pare{x, y}(1-\theta\pare{y})b\pare{x, t}\phi\pare{x, t}\, dydxdt.
\end{align}
Since this holds for every $\phi$, we conclude that $b\pare{x, t}$ is a solution to
\begin{align}
\begin{cases}
& \displaystyle
\frac{\partial b}{\partial t}\pare{x, t}=\theta\pare{x}\int_\Omega J\pare{x, y}a\pare{y,t}dy-b\pare{x,t}\int_\Omega J\pare{x, y}\pare{1-\theta\pare{y}}dy\\[10pt]
&b\pare{x, 0}=\theta\pare{x}u_0\pare{x} .
\end{cases}
\end{align}

Following a similar strategy, multiply now both sides of equation \eqref{sys1} by $\chi_{A_n}\pare{x}\phi_n\pare{x, t}$ and then integrate respect to the variables $x$ and $t$ to obtain 
\begin{align}\label{sysk2} 
-\int_0^T\int_\Omega &\frac{\partial \phi_n}{\partial t} \pare{x, t}a_n\pare{x, t}\, dxdt+\int_\Omega \chi_{A_n}\pare{x}u_0\pare{x}\phi_n\pare{x,0}\, dx\\[10pt]
&=\int_0^T\int_\Omega \int_\Omega \chi_{A_n}\pare{x}J\pare{x,y}\pare{u_n\pare{y, t}-u_n\pare{x, t}}\phi_n\pare{x, t}\, dydxdt\\[10pt]
&=\int_0^T\int_\Omega \int_\Omega J\pare{x, y}\chi_{A_n}\pare{x}\pare{a_n\pare{y, t}+b_n\pare{y,t}}\phi_n\pare{x, t}\, dydxdt\\[10pt]
&\hspace{+15pt}-\int_0^T\int_\Omega \int_\Omega J\pare{x, y}a_n\pare{x, t}\phi_n\pare{x, t}\, dydxdt.
\end{align}

Arguing as before we can conclude that
$$
\int_0^T\int_\Omega \frac{\partial \phi_n}{\partial t} \pare{x, t}a_n\pare{x, t}\, dxdt\xrightarrow[n\to +\infty]{}\int_0^T\int_\Omega 
\frac{\partial \phi}{\partial t} \pare{x, t}a\pare{x, t}\, dxdt,
$$
$$
\int_\Omega \chi_{A_n}\pare{x}u_0\pare{x}\phi_n\pare{x,0}\, dx\xrightarrow[n\to +\infty]{}\int_\Omega \pare{1-\theta\pare{x}}u_0(x)\phi\pare{x,0}\, dx,
$$
$$
\begin{array}{l}
\displaystyle
\int_0^T\int_\Omega \int_\Omega J\pare{x, y}\chi_{A_n}\pare{x}\pare{a_n\pare{y, t}+b_n\pare{y,t}}\phi_n\pare{x, t}\, dydxdt\\[10pt]
\displaystyle \hspace{+120pt}\xrightarrow[n\to +\infty]{}\int_0^T\int_\Omega \int_\Omega 
J\pare{x, y}\pare{1-\theta\pare{x}}\pare{a\pare{y, t}+b\pare{y,t}}\phi\pare{x, t}\, dydxdt,
\end{array}
$$
and
$$
\int_0^T\int_\Omega \int_\Omega J\pare{x, y}a_n\pare{x, t}\phi_n\pare{x, t}\, dydxdt\xrightarrow[n\to +\infty]{}\int_0^T\int_\Omega 
\int_\Omega J\pare{x, y}a\pare{x, t}\phi\pare{x, t}\, dydxdt.
$$

Therefore, we get
\begin{align}\label{sys2}
-\int_0^T\int_\Omega & \frac{\partial \phi}{\partial t} \pare{x, t}a \pare{x, t}dxdt+\int_\Omega (1-\theta\pare{x})u_0\pare{x}\phi\pare{x,0}dx\\[10pt]
&=\int_0^T\int_\Omega \int_\Omega J\pare{x, y} (1-\theta\pare{x})\pare{a\pare{y, t}+b\pare{y,t}}\phi\pare{x, t}dydxdt
-\int_0^T\int_\Omega \int_\Omega J\pare{x, y}a\pare{x, t}\phi\pare{x, t}dydxdt,
\end{align}
and hence we obtain that $a\pare{x, t}$ is a solution to
\begin{align}
\begin{cases}
&\displaystyle \frac{\partial a}{\partial t}\pare{x, t}=\int_\Omega 
J\pare{x, y}\pare{a\pare{y,t}-a\pare{x,t}}dy-\theta\pare{x}\int_\Omega 
J\pare{x, y}a\pare{y,t}dy\displaystyle +\pare{1-\theta\pare{x}}\int_\Omega J\pare{x,y}b\pare{y,t}dy\\[10pt]
&a\pare{x, 0}=\pare{1-\theta\pare{x}}u_0\pare{x}.
\end{cases}
\end{align}
The proof is finished.
\end{proof}
Now,  to obtain convergence of the whole sequences $\{a_n\}_n$, $\{b_n\}_n$ (not only along subsequences) 
we need to prove that the limit system admits a unique solution. 
In fact uniqueness holds for measure valued solutions as will we show in Lemma \ref{lemma:weaksys}
but here we need only the content of the following lemma to obtain uniqueness of the limit.

\begin{lemma}\label{lemma:eu}
There exists a unique solution to the system 
\begin{equation}\label{sys1.intro.99}
\left\{
\begin{array}{ll}
\displaystyle \frac{\partial a}{\partial t}\pare{x, t}=\int_{\Omega}J\pare{x, y}\pare{a\pare{y,t}-a\pare{x,t}}\, dy
\\[10pt]
\displaystyle \qquad \qquad \quad 
-\theta\pare{x}\int_{\Omega}J\pare{x, y}a\pare{y,t}dy+\pare{1-\theta\pare{x}}\int_{\Omega}J\pare{x,y}b\pare{y,t}\, dy \quad & x\in \Omega , \, t>0,\\[10pt]
\displaystyle \frac{\partial b}{\partial t}\pare{x, t}=\theta\pare{x}\int_{\Omega} J\pare{x, y}a\pare{y,t}dy-b\pare{x,t}\int_{\Omega} 
J\pare{x, y}\pare{1-\theta (y)}\, dy
\quad & x\in \Omega , \, t>0,\\[10pt]
a\pare{x, 0}=\pare{1-\theta\pare{x}}u_0\pare{x}, \quad b\pare{x, 0}=\theta\pare{x}u_0\pare{x}
\qquad & x\in \Omega.
\end{array} \right.
\end{equation}
\end{lemma}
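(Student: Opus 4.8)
The plan is to recognize \eqref{sys1.intro.99} as a linear Cauchy problem in the Hilbert space $X:=L^2(\Omega)\times L^2(\Omega)$ driven by a \emph{bounded} operator, so that both existence and uniqueness reduce to elementary ODE theory in Banach spaces. Writing $U=(a,b)$ and using $\int_\Omega J(x,y)\,dy=1$, the first right-hand side becomes $\int_\Omega J(x,y)(a(y,t)-a(x,t))\,dy=(Ka)(x,t)-a(x,t)$, where $(Kf)(x):=\int_\Omega J(x,y)f(y)\,dy$. After collecting terms the system reads $\partial_t a=(1-\theta)K(a+b)-a$ and $\partial_t b=\theta\,Ka-c\,b$, with $c(x):=\int_\Omega J(x,y)(1-\theta(y))\,dy$. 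Each ingredient is a bounded linear map on $L^2(\Omega)$: the kernel operator satisfies $\norm{K}\le 1$ (apply Cauchy--Schwarz against the probability measure $J(x,\cdot)\,dy$ and use the symmetry $\int_\Omega J(x,y)\,dx=1$), while multiplication by $\theta$, by $1-\theta$ and by $c$ are bounded since $0<\theta<1$ and $0\le c(x)\le 1$. Hence $\mathcal{L}\colon X\to X$, $\mathcal{L}(a,b):=\big((1-\theta)K(a+b)-a,\ \theta Ka-cb\big)$, is a bounded operator, with norm controlled by $\norm{J}_\infty$ and $|\Omega|$.

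For existence, since $\mathcal{L}$ is bounded the exponential series $e^{t\mathcal{L}}=\sum_{k\ge 0}t^k\mathcal{L}^k/k!$ converges in operator norm, uniformly on $[0,T]$, and $U(t):=e^{t\mathcal{L}}U_0$ with $U_0:=\pare{(1-\theta)u_0,\ \theta u_0}\in X$ defines an element of $C^1([0,T];X)$ solving $U'=\mathcal{L}U$, $U(0)=U_0$. Equivalently, I would run a Banach fixed-point argument on the Duhamel formulation $U(t)=U_0+\int_0^t\mathcal{L}U(s)\,ds$ in $C([0,T];X)$, endowing the space with the weighted norm $\sup_{t}e^{-2\norm{\mathcal{L}}t}\norm{U(t)}_X$ to make the map a contraction in one shot. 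Either route yields a solution with $a,b\in C([0,T];L^2(\Omega))$ attaining the prescribed initial data and satisfying the equations in $L^2(\Omega)$ for every $t$. Uniqueness is then immediate from an energy estimate: if $U_1,U_2$ solve the same problem, their difference $W=U_1-U_2$ satisfies $W'=\mathcal{L}W$, $W(0)=0$, whence $\tfrac12\frac{d}{dt}\norm{W(t)}_X^2=\pic{\mathcal{L}W(t),W(t)}_X\le\norm{\mathcal{L}}\,\norm{W(t)}_X^2$, and Gronwall's inequality gives $\norm{W(t)}_X^2\le\norm{W(0)}_X^2\,e^{2\norm{\mathcal{L}}t}=0$, so $W\equiv0$.

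The one point demanding care -- and the main, though mild, obstacle -- is matching the \emph{notion} of solution. A priori the limits $a,b$ produced in Theorem \ref{th32} solve \eqref{sys1.intro.99} only in the distributional-in-$t$ sense, tested against functions $\phi$. However, because $\mathcal{L}$ is bounded on $X$, any such weak solution automatically has $\partial_t a,\partial_t b\in L^\infty\pare{0,T:L^2(\Omega)}$, hence (after modification on a null set) lies in $C([0,T];L^2(\Omega))$ and solves the equation in the strong sense used above. Thus the weak and strong formulations coincide, and the Gronwall argument applies directly to the limits obtained in Theorem \ref{th32}, upgrading their weak $L^2$ convergence along subsequences to convergence of the full sequences $\brc{a_n}_n$, $\brc{b_n}_n$.
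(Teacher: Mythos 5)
Your proof is correct, but it takes a noticeably different route from the paper's. You recast the system as an abstract linear ODE $U'=\mathcal{L}U$ on $X=L^2(\Omega)\times L^2(\Omega)$ with $\mathcal{L}$ a bounded operator (your reduction $\partial_t a=(1-\theta)K(a+b)-a$, $\partial_t b=\theta Ka-cb$ is right, and the bound $\norm{K}\leq 1$ via Cauchy--Schwarz plus the symmetry of $J$ is sound), which gives you existence for free through the exponential series $e^{t\mathcal{L}}$ and uniqueness through the one-line estimate $\tfrac12\tfrac{d}{dt}\norm{W}_X^2=\pic{\mathcal{L}W,W}_X\leq\norm{\mathcal{L}}\norm{W}_X^2$ and Gronwall. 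The paper instead obtains existence ``from the previous limit'' (the pair $(a,b)$ constructed in Theorem \ref{th32}) and proves uniqueness by hand: it multiplies each equation of the difference system by $a$ and $b$ respectively, integrates, and bounds every term separately by Cauchy--Schwarz with explicit constants, before summing and applying Gronwall. So the uniqueness arguments are morally the same (energy plus Gronwall), but yours is packaged at the level of operator norms while the paper's is term-by-term; the genuine difference is in existence, where your construction is self-contained and independent of the homogenization procedure, whereas the paper's depends on it. Your approach also buys a point the paper leaves implicit: since $\mathcal{L}$ is bounded, a distributional-in-time solution with values in $L^2$ is automatically in $H^1(0,T;X)\subset C([0,T];X)$ and hence a strong solution, which is exactly what legitimizes applying the energy identity to the limits produced by Theorem \ref{th32}. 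The paper's hands-on computation, on the other hand, displays the explicit constants and would survive in settings where one only has dissipativity rather than boundedness of the operator.
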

\begin{proof} First, we observe that we have existence of a solution from our previous
limit.

Let us prove uniqueness.
Suppose that $\pare{a_1\pare{x, t}, b_1\pare{x,t}}$ and $\pare{a_2\pare{x, t}, b_2\pare{x,t}}$ are two solutions of equation \eqref{sys1.intro}. Call $a\pare{x, t}=a_1\pare{x, t}-a_2\pare{x, t}$ and $b\pare{x, t}=b_1\pare{x, t}-b_2\pare{x, t}$. Then $\pare{a\pare{x,t}, b\pare{x, t}}$ satisfies the same system \eqref{sys1.intro} but with
$$
\displaystyle  a \pare{x, 0}\equiv 0, \qquad \mbox{ and } \qquad b\pare{x, 0}\equiv 0,
$$
as initial conditions. Our goal is to show that $a\equiv b \equiv 0$.

From the first equation in system \eqref{sys1.intro}, multiplying by $a(x,t)$ and integrating in $x$, we get
\begin{align}
\frac{\partial}{\partial t}\int_\Omega \frac{a^2\pare{x, t}}{2}dx=&\int_\Omega a\pare{x, t}\pare{\int_\Omega J\pare{x, y}{a\pare{y,t}dy}}dx
-\int_\Omega 
a^2\pare{x,t}\pare{\int_\Omega J\pare{x, y}dy}dx\\[10pt]
&-\int_\Omega a\pare{x,t}\theta\pare{x}\pare{\int_\Omega J\pare{x, y}a\pare{y,t}dy}dx
+\int_\Omega a\pare{x,t}\pare{1-\theta\pare{x}}\pare{\int_\Omega 
J\pare{x,y}b\pare{y,t}dy}dx.
\end{align}
Observe that from Cauchy-Schwarz's inequality we get that
\begin{align}
\abs{\int_\Omega a\pare{x, t}\pare{\int_\Omega J\pare{x, y}{a\pare{y,t}dy}}dx}&\leq \cor{\int_\Omega a^2\pare{x, t}dx}^{\frac{1}{2}} \cor{\int_\Omega \pare{\int_\Omega J\pare{x, y}a\pare{y,t}dy }^2dx}^{\frac{1}{2}}\\[10pt]
&\leq {\int_\Omega a^2\pare{x, t}dx}+ {\int_\Omega \pare{\int_\Omega J\pare{x, y}a\pare{y,t}dy }^2dx}\\[10pt]
&\leq \int_\Omega a^2\pare{x, t}dx+\int_\Omega \int_\Omega J^2(x,y)dxdy\int_\Omega\int_\Omega a^2(y, t)dydx\\[10pt]
&\leq\pare{ 1+ \abs{\Omega}^2\norm{J}_\infty^2}\int_\Omega a^2\pare{x, t}dx.
\end{align}
Since $\int_\Omega J\pare{x, y}dy\equiv 1$ we also have the following bounds
\begin{align}
\abs{\int_\Omega a^2\pare{x,t}\pare{\int_\Omega J\pare{x, y}dy}dx}\leq \int_\Omega a^2\pare{x, t}dx,
\end{align}
\begin{align}
\abs{\int_\Omega a\pare{x,t}\theta\pare{x}\pare{\int_\Omega J\pare{x, y}a\pare{y,t}dy}dx}&\leq \cor{\int_\Omega
a^2(x, t)\theta^2\pare{x}dx}^{\frac{1}{2}}\cor{\int_\Omega \pare{\int_\Omega J(x,y)a(y, t)dy}^2dx}^{\frac{1}{2}}\\[10pt]
&\leq\int_\Omega a^2(x, t)dx+\int_\Omega \int_\Omega J^2(x,y)dxdy\int_\Omega\int_\Omega a^2(y, t)dydx\\[10pt]
&\leq\pare{1+\abs{\Omega}^2\norm{J}_\infty^2}\int_\Omega a^2\pare{x, t}dx
\end{align}
and 
\begin{align}
 \abs{\int_\Omega a\pare{x,t}\pare{1-\theta\pare{x}}\pare{\int_\Omega
J\pare{x,y}b\pare{y,t}dy}dx}
& \leq \cor{\int_\Omega a^2(x, t)\pare{1-\theta\pare{x}}^2dx}^{\frac{1}{2}}\cor{\int_\Omega
\pare{\int_\Omega J(x,y)b(y, t)dy}^2dx}^{\frac{1}{2}}\\[10pt]
&  \leq\int_\Omega a^2(x, t)dx+\int_\Omega \int_\Omega J^2(x,y)dxdy\int_\Omega\int_\Omega b^2(y, t)dydx\\[10pt]
& \leq \int_\Omega a^2\pare{x, t}dx+\abs{\Omega}^2\norm{J}_\infty^2\int_\Omega b^2\pare{x, t}dx.
\end{align}

Therefore, we have that there exists a constant $C_1=4\pare{1+\abs{\Omega}^2\norm{J}_\infty^2}$ such that
\begin{equation} \label{eq.11}
\frac{\partial}{\partial t}\int_\Omega \frac{a^2\pare{x, t}}{2}dx \leq C_1\left( \int_\Omega a^2\pare{x, t}dx+\int_\Omega b^2\pare{x, t}dx \right).
\end{equation}

Now, from the second equation in system \eqref{sys1.intro} we obtain
\begin{align}
\frac{\partial}{\partial t}\int_\Omega 
\frac{b^2\pare{x, t}}{2}dx=&\int_\Omega b\pare{x, t}\theta\pare{x}\pare{\int_\Omega
J\pare{x, y}{a\pare{y,t}dy}}dx  -\int_\Omega 
b^2\pare{x,t}\pare{\int_\Omega 
J\pare{x, y}\pare{1-\theta\pare{y}}dy}dx.
\end{align}
Using again Cauchy-Schwarz's inequality we get that
\begin{align}
\abs{\int_\Omega b\pare{x, t}\theta\pare{x}\pare{\int_\Omega
J\pare{x, y}{a\pare{y,t}dy}}dx}
& \leq \cor{\int_\Omega b^2\pare{x, t}\theta^2\pare{x}dx}^{\frac{1}{2}} \cor{\int_\Omega
\pare{\int_\Omega J\pare{x, y}a\pare{y,t}dy }^2dx}^{\frac{1}{2}}\\[10pt]
&  \leq {\int_\Omega b^2\pare{x, t}dx}+ {\int_\Omega \pare{\int_\Omega J\pare{x, y}a\pare{y,t}dy }^2dx}\\[10pt]
&  \leq \int_\Omega b^2\pare{x, t}dx+\int_\Omega \int_\Omega J^2(x,y)dxdy\int_\Omega\int_\Omega  a^2(y, t)dydx\\[10pt]
& \leq\int_\Omega b^2\pare{x, t}dx+\abs{\Omega}^2{\norm{J}_\infty^2}\int_\Omega a^2\pare{x, t}dx
\end{align}
and
\begin{align}
\int_\Omega b^2\pare{x,t}\pare{\int_\Omega J\pare{x, y}\pare{1-\theta\pare{y}}dy}dx\leq \abs{\Omega}{\norm{J}_\infty}\int_\Omega b^2\pare{x, t}dx.
\end{align}
Therefore
\begin{equation} \label{eq.22}
\frac{\partial}{\partial t}\int_\Omega \frac{b^2\pare{x, t}}{2}dx \leq C_2\left( \int_\Omega a^2\pare{x, t}dx+\int_\Omega b^2\pare{x, t}dx \right),
\end{equation}
where $C_2=1+\abs{\Omega}^2\norm{J}^2_\infty+\abs{\Omega}\norm{J}_\infty$. 
\noindent
By \eqref{eq.11} and \eqref{eq.22} we have
\begin{align}
\frac{\partial}{\partial t}\int_\Omega \frac{a^2\pare{x, t}}{2}dx+\frac{\partial}{\partial t}\int_\Omega
\frac{b^2\pare{x, t}}{2}dx\leq 2\max\brc{C_1, C_2}\pare{\int_\Omega a^2\pare{x, t}dx+\int_\Omega b^2\pare{x, t}dx}.
\end{align}
Since $a(x, 0)\equiv b(x, 0)\equiv 0$, by Gronwell's inequality we can obtain that $$a\pare{x, t}\equiv b\pare{x, t}\equiv 0.$$ 
This concludes the proof.
\end{proof}

\subsection{Convergence of the processes} \label{subsect-proc}

In this section we analyze the convergence in distribution of the process $\pare{X_n(t), I_n(t)}_{t\in [0,T]}$ in a bounded interval of time $[0, T]$.
We already explained in the introduction, Section \ref{sec.intro}, that $\pare{X_n(t)}_t$ is a Markov process with generator ${L}_n$ defined by \eqref{gen.intro}.

Our first goal is to show that $X_n(t)$ has a probability density $u_n(x, t)$ which is the unique solution to system \eqref{evol.intro}.
To this end we will prove uniqueness of weak solutions to \eqref{evol.intro} in the space of measures.

\begin{lemma}\label{lemma:uniqweak}
For every $n\in \mathbb N$ let 
$$D_n:=\left\{G:\overline\Omega\times [0,\infty)\to\bb R\; : G(\cdot, t)\in C\pare{A_n}\cap C^2\pare{B_n},\,  G(x, \cdot)\in C^1\pare{[0, T]},  \frac{\partial G}{\partial \eta}\mid_{\partial B_n}=0\right\}.$$ 
Then,
there exists a unique measure $\nu_t$ solution to 
\begin{align}\label{weak:eq2}
\begin{cases}
\displaystyle \frac{\partial}{\partial t}\int_{\overline\Omega}G(x, t)\nu_t(dx)=\int_{\overline\Omega}\cor{\mathcal{L}_nG(x, t)+\frac{\partial}{\partial t}G(x,t)}\nu_t(dx),&\quad x\in\overline\Omega, \; t\in [0, T],
\\[10pt]
\displaystyle\int_{\overline\Omega}G(x, t)\nu_0(dx)=\int_{\overline\Omega}G(x, t)u_0(x)dx, &\quad x\in\overline\Omega,\; t\in [0, T],
\end{cases}
\end{align}
for every $G\in D_n$.

Such solution is given by $\nu^n_t(dx)=u_n(x, t)dx,$ where
$u_n(x, t)$ is the unique solution to \eqref{evol.intro}.
\end{lemma}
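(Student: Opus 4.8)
The plan is to prove the statement in two movements: first that $\nu^n_t(dx) = u_n(x,t)\,dx$ actually solves \eqref{weak:eq2} (existence), and then that any measure-valued solution with the given initial data must coincide with it (uniqueness), the latter by a duality argument.

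For existence, the key fact is that the generator ${L}_n$ is symmetric with respect to the $L^2$ pairing on the relevant domain. The nonlocal part of ${L}_n$ can be written as $\int_\Omega K_n(x,y)(f(y)-f(x))\,dy$ with $K_n(x,y)=\chi_{A_n}(x)J(x,y)+\chi_{B_n}(x)\chi_{A_n}(y)J(x,y)$; using $\chi_{A_n}+\chi_{B_n}\equiv 1$ and the symmetry of $J$, one checks in the four cases $(x,y)\in A_n\times A_n,\,A_n\times B_n,\,B_n\times A_n,\,B_n\times B_n$ that $K_n(x,y)=K_n(y,x)$. Hence $\int_\Omega\left(\int_\Omega K_n(x,y)(f(y)-f(x))\,dy\right)g(x)\,dx=-\tfrac12\iint_{\Omega\times\Omega}K_n(x,y)(f(y)-f(x))(g(y)-g(x))\,dy\,dx$, which is symmetric in $f,g$. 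For the local part, integrating by parts on each connected component of $B_n$ and using that both the solution $u_n$ and the test function carry zero Neumann data on $\partial B_n$ gives $\int_\Omega\chi_{B_n}(\Delta f)g=\int_\Omega\chi_{B_n}f(\Delta g)$. Then, for $G\in D_n$,
$$
\frac{d}{dt}\int_{\overline\Omega}G(x,t)u_n(x,t)\,dx=\int_{\overline\Omega}\partial_t G\,u_n+\int_{\overline\Omega}G\,{L}_n u_n=\int_{\overline\Omega}\partial_t G\,u_n+\int_{\overline\Omega}({L}_n G)\,u_n,
$$
where I used $\partial_t u_n={L}_n u_n$ from \eqref{evol.intro} and the symmetry just established. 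This is precisely \eqref{weak:eq2} with $\nu_t=u_n\,dx$, and the initial condition holds since $u_n(\cdot,0)=u_0$.

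For uniqueness, let $\nu^1_t,\nu^2_t$ be two solutions with the same initial data and set $\mu_t=\nu^1_t-\nu^2_t$, so $\mu_0=0$ and, integrating \eqref{weak:eq2} in time, $\int_{\overline\Omega}G(\cdot,s)\,\mu_s=\int_0^s\int_{\overline\Omega}[{L}_n G+\partial_t G]\,\mu_t\,dt$ for every $G\in D_n$ and every $s\in[0,T]$. The idea is duality: fix $s$ and a spatial profile $\psi\in C(A_n)\cap C^2(B_n)$ with $\partial_\eta\psi|_{\partial B_n}=0$, and choose $G$ to solve the terminal-value (backward) problem $\partial_t G+{L}_n G=0$ on $[0,s]$ with $G(\cdot,s)=\psi$. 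With such a $G$ the integrand on the right vanishes identically, leaving $\int_{\overline\Omega}\psi\,\mu_s=0$. Since such $\psi$ form a family rich enough to determine finite signed measures on $\overline\Omega$ (they are dense in $C(\overline\Omega)$ after a standard mollification), we conclude $\mu_s=0$ for every $s$; combined with the first step this identifies the unique solution as $u_n\,dx$.

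It remains to produce the test function $G$, and this is where the real work lies. Via the time reversal $H(\cdot,\tau)=G(\cdot,s-\tau)$ the backward problem becomes the forward problem $\partial_\tau H={L}_n H$, $H(\cdot,0)=\psi$, which is exactly \eqref{evol.intro} with initial datum $\psi$; its well-posedness is guaranteed by \cite{GQR}. The main obstacle is regularity: to be admissible, $G$ must satisfy $G(\cdot,t)\in C(A_n)\cap C^2(B_n)$, $G(x,\cdot)\in C^1([0,T])$ and the Neumann condition. Since the nonlocal part of ${L}_n$ is a bounded perturbation with continuous kernel, the difficulty is confined to the Laplacian on the components of $B_n$; interior parabolic regularity on each component, the Neumann condition built into the heat semigroup there, and the smoothness of $J$ yield the required $C^2$-in-space and $C^1$-in-time regularity for smooth $\psi$. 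I would therefore first establish the uniqueness identity $\int_{\overline\Omega}\psi\,\mu_s=0$ for this regular subclass of profiles $\psi$, and then observe that this already suffices because such profiles form a determining family for finite measures on $\overline\Omega$.
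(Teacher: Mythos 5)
Your proposal is correct and takes essentially the same route as the paper: uniqueness is proved by duality, choosing the test function to solve the backward problem $\partial_s G + {L}_n G = 0$ with prescribed terminal data, which after the time reversal $G(x,t-s)$ is exactly problem \eqref{evol.intro} and hence solvable in the class $D_n$ by \cite{GQR}. The only differences are matters of detail rather than of method: for existence the paper simply cites \cite{GQR} where you verify the symmetry of ${L}_n$ explicitly, and for uniqueness the paper plugs in arbitrary terminal data $f\in C(\overline\Omega)$ directly where you first work with a regular subclass of profiles $\psi$ and then pass to all of $C(\overline\Omega)$ by a determining-family argument.
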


\begin{proof}
The existence of a solution to \eqref{weak:eq2} follows just by taking $\nu_t^n(dx)=u_n(x, t)dx$ where $u_n(x, t)$ is a solution of system \eqref{evol.intro}, see \cite{GQR} for more details. We prove now the uniqueness.
Suppose that there exist two trajectories of measures $\nu_t^n(dx)$ and $\tilde{\nu}_t^n(dx)$ such that \eqref{weak:eq2} holds. Call 
$$
\omega^n_t(dx)=\nu_t^n(dx)-\tilde{\nu}_t^n(dx).
$$ 
The evolution of $\omega^n_t(dx)$ satisfies equation \eqref{weak:eq2} with initial condition $\int_{\overline\Omega}G(x, t)\omega^n_0(dx)=0$.
Therefore, for all $t\in [0, T]$, it holds that
\begin{align}\label{eqg}
\int_{\overline\Omega}G(x, t)\omega^n_t(dx)=\int_0^t\cor{\int_{\overline\Omega} \mathcal{L}_nG(x, s)\omega^n_s(dx)}ds+\int_0^t\cor{\int_{\overline\Omega} \frac{\partial}{\partial s}G(x,s)\omega^n_s(dx)}ds.
\end{align}
For every $f\in C(\overline\Omega)$ and $t\in [0,T]$ consider the function $\widetilde{G_n}(x, s)$ given by the solution to the evolution problem
with time reversed
\begin{align}\label{systtre}
\begin{cases}
\displaystyle \mathcal{L}_n\widetilde{G_n}(x, s)+\frac{\partial}{\partial s}\widetilde{G_n}(x, s)=0,\qquad & x \in \overline\Omega, \, s\in [0, t],\\[10pt]
\displaystyle \widetilde{G_n}(x, t)=f(x),                       & x\in\overline\Omega.
\end{cases}
\end{align}
Observe that $\widetilde{G_n}$ is a solution of \eqref{systtre} if and only if  the function $\widetilde{G_n}(x, t-s)$ satisfies equation \eqref{evol.intro}, in the variables $(x,s)$, with initial condition at time $t=0$ given by $f(x)$.
Therefore, from \cite{GQR}, we know that there exists a $\widetilde{G_n}\in D_n$ solution to \eqref{systtre}.
Replacing $G(x, s)=\widetilde{G_n}(x, s)$ in \eqref{eqg} we get that
\begin{align}
\int f(x)\omega_t^n(dx)=0.
\end{align}
By the arbitrariness of the function $f\in C(\overline\Omega)$ and the time $t$  we can conclude that, for all $t\geq 0$, $\omega_t^n(dx)$ is the null measure and therefore $\nu_t^n(dx)=\tilde \nu_t^n(dx)$.
\end{proof}

As an immediate consequence of this uniqueness result we get that the process $X_n(t)$ has a density.

\begin{corollary}  \label{corol.densidad} The process $X_n(t)$ has a density
that is characterized as the unique solution $u_n(x,t)$ to 
\begin{equation}\label{evol.fffff}
\left\{
\begin{array}{ll}
\displaystyle \frac{\partial u_n}{\partial t} (x,t) = {L}_n u_n(x,t), \qquad & x\in \Omega, \, t>0, \\[10pt]
u_n(x,0)=u_0(x), \qquad & x\in \Omega.
\end{array}
\right.
\end{equation}
\end{corollary}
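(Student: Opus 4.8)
The plan is to identify the law of $X_n(t)$ with the measure solution produced by Lemma \ref{lemma:uniqweak}, and then simply read off that this measure has density $u_n$. Write $\mu^n_t$ for the distribution of $X_n(t)$, that is, $\mu^n_t\pare{E}=P\pare{X_n(t)\in E}$ for every measurable $E\subseteq\overline\Omega$. By the hypothesis on the initial position, $\mu^n_0(dx)=u_0(x)\,dx$, so the initial condition in \eqref{weak:eq2} is met. It then remains only to verify that $t\mapsto\mu^n_t$ solves the weak evolution equation \eqref{weak:eq2} for every test function $G\in D_n$.

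First I would invoke that $X_n(t)$ is the Markov process generated by $L_n$ of \eqref{gen.intro}. For any $G\in D_n$ (observe that the condition $\frac{\partial G}{\partial\eta}\mid_{\partial B_n}=0$ is exactly what places $G(\cdot,t)$ in the domain of the reflected-diffusion part of the generator, so that the reflection at $\partial B_n$ produces no boundary contribution), Dynkin's formula applied to the time-dependent test function $G$ gives that
$$
G\pare{X_n(t), t}-G\pare{X_n(0), 0}-\int_0^t\pare{L_n G\pare{X_n(s), s}+\frac{\partial}{\partial s}G\pare{X_n(s), s}}\,ds
$$
is a mean-zero martingale. Taking expectations and rewriting them as integrals against $\mu^n_s$ yields
$$
\int_{\overline\Omega}G(x, t)\mu^n_t(dx)-\int_{\overline\Omega}G(x, 0)\mu^n_0(dx)=\int_0^t\int_{\overline\Omega}\cor{L_n G(x, s)+\frac{\partial}{\partial s}G(x, s)}\mu^n_s(dx)\,ds,
$$
and differentiating in $t$ produces precisely the weak equation appearing in \eqref{weak:eq2}. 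Hence $\mu^n_t$ is a measure solution of \eqref{weak:eq2}.

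Finally, by the uniqueness assertion of Lemma \ref{lemma:uniqweak}, any solution of \eqref{weak:eq2} coincides with $\nu^n_t(dx)=u_n(x, t)\,dx$, where $u_n$ is the unique solution to \eqref{evol.intro}. Therefore $\mu^n_t(dx)=u_n(x, t)\,dx$, which is exactly the statement that $X_n(t)$ has density $u_n(x,t)$ characterized as the solution of \eqref{evol.fffff}. The one delicate point, and the step I expect to require the most care, is the justification of Dynkin's formula for the mixed jump/reflected-diffusion dynamics, and in particular checking that the pathwise action of the generator on $G\in D_n$ agrees with \eqref{gen.intro}: the membership $G\in D_n$, and especially the Neumann condition, is tailored precisely so that no boundary terms survive and this identity holds, after which the reduction to Lemma \ref{lemma:uniqweak} is immediate.
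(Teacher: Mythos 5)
Your proposal is correct and follows essentially the same route as the paper: the paper deduces the corollary directly from Lemma \ref{lemma:uniqweak} by observing that the law of $X_n(t)$ is a measure solution of \eqref{weak:eq2} (via the martingale/Dynkin property of the Markov process with generator $L_n$, which you spell out explicitly), and then invoking uniqueness to identify it with $u_n(x,t)\,dx$. You have simply made explicit the step the paper calls ``immediate,'' including the correct observation that the Neumann condition in $D_n$ is what makes the generator act on test functions without boundary terms.
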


Consider now the coupled process $\pare{X_n\pare{t}, I_n\pare{t}}\in D\pare{[0, T],\overline\Omega}\times D\pare{[0, T],\brc{1,2}}$. 
As we explained in the introduction, $I_n(t)$ contains the information over the set ($A_n$ or $B_n$) in which $X_n(t)$ is located. More precisely, 
$$I_n\pare{t}=
\left\{\begin{array}{ll}
1 \qquad \mbox{if } X_n(t) \in A_n, \\[10pt] 
2 \qquad \mbox{if } X_n(t) \in B_n.
\end{array} \right.
$$ 
The pair $\pare{X_n\pare{t}, I_n\pare{t}}$ is a Markov process whose generator $\mathcal{L}_n$ is defined on functions  
$$f\in \mathcal T_n:=
\left\{f : \overline\Omega\times \brc{1,2}  \mapsto \mathbb{R},\;: f(\cdot,1) \in C\pare{A_n}, \, f (\cdot,2)\in C^{2}\pare{B_n},  \frac{\partial f}{\partial \eta} (\cdot,2)\mid_{\partial B_n}=0
\right\}$$ as follows,
\begin{align}\label{gen.intro1}
\mathcal{L}_nf(x, i)=\begin{cases}& \displaystyle
\chi_{A_n}\pare{x}\int_\Omega \chi_{A_n}\pare{y}J\pare{x,y}\pare{f\pare{y,1}-f\pare{x,1}}dy \\[10pt]
& \displaystyle \qquad +\chi_{A_n}\pare{x}\int_\Omega \chi_{B_n}\pare{y}J\pare{x,y}\pare{f\pare{y,2}-f\pare{x,1}}dy
\qquad\qquad\qquad\qquad\qquad\text{if $i=1$,} \\[10pt] 
&\displaystyle \chi_{B_n}\pare{x}\int_\Omega \chi_{A_n}\pare{y}J\pare{x,y}\pare{f\pare{y, 1}-f\pare{x, 2}}dy+\frac{1}{2}\chi_{B_n}\pare{x} \Delta f \pare{x, 2}\qquad \qquad\text{if $i=2$.}
\end{cases}
\end{align}
 Now we have a remark on notation, 
we are using $L_n$ for the generator for $X_n(t)$ and $\mathcal{L}_n$ for the generator for $(X_n (t),I_n(t))$.
Observe that a function $f\in \mathcal T_n$ can be thought as a pair of functions
\begin{align}
\begin{cases}
f(x,1) = f_1\pare{x},\\
f(x,2)= f_2\pare{x},
\end{cases}
\end{align}
where $f_1\in  C\pare{A_n}$ and $f_2\in\{f\in C^2\pare{B_n}: \frac{\partial f_1}{\partial \eta}\mid_{\partial B_n}=0\}$.

By Lemma A.1.5.1 of \cite{KL} we know that, for every bounded function $f\in \overline\Omega\times\brc{1,2}\to \bb R$,
\begin{align}\label{f1}
M_n^f\pare{t}=f\pare{X_n\pare{t}, I_n\pare{t}}-f\pare{X_n\pare{0}, I_n\pare{0}}-\int_0^t\mathcal L_nf\pare{X_n\pare{s}, I_n\pare{s}}ds
\end{align}
and
\begin{align}\label{f2}
N_n^f\pare{t}=\pare{M_n^f\pare{t}}^2-\int_0^t\pare{\mathcal L_n\pare{f\pare{X_n\pare{s}, I_n\pare{s}}}^2-2f\pare{X_n\pare{s}, I_n(s)}\mathcal L_nf\pare{X_n\pare{s} I_n(s)}} ds
\end{align}
are martingales with respect to the natural filtration generated by the process.

Let $P_n\in \mathcal M_1\pare{D\pare{[0, T], \overline\Omega}\times D\pare{[0, T], \brc{1,2}}}$ be the law of the process $\pare{X_n\pare{t}, I_n\pare{t}}_{t\in [0, T]}$; in our notation $\mathcal M_1(X)$ denotes the space of probability measures on a metric space $X$. The next lemma guarantees the tightness of the sequence $\pare{P_n}_ {n\in\bb N}$.
\begin{lemma}\label{lemma1}
The sequence of probability measures $\pare{P_n}_{n\in \bb N}$ is tight.
\end{lemma}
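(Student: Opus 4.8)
The plan is to verify tightness in the product space $D([0,T],\overline\Omega)\times D([0,T],\{1,2\})$ by checking tightness of each marginal family separately, since tightness in a finite product follows from tightness of the coordinate families. For the spatial component $(X_n(t))_t$, the state space $\overline\Omega$ is compact, so the one-dimensional time marginals are automatically tight; what remains is to control the modulus of continuity in the Skorohod sense via Aldous' criterion. I would use the martingale characterization provided in \eqref{f1}--\eqref{f2}: applying the formula to suitable test functions $f$ independent of the label variable $i$ (for instance $f(x,i)=g(x)$ with $g\in C^2(\overline\Omega)$, or even the coordinate functions $g(x)=x_k$) yields that the bracket $\langle M_n^f\rangle_t$, read off from \eqref{f2}, is uniformly bounded. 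The key estimate is that $\mathcal L_n f$ and the carré-du-champ quantity $\mathcal L_n(f^2)-2f\,\mathcal L_n f$ are uniformly bounded in $n$: the nonlocal pieces of $\mathcal L_n$ contribute terms bounded by $\|J\|_\infty |\Omega|\,(\mathrm{osc}\,f)$ uniformly in $n$ (the cutoffs $\chi_{A_n},\chi_{B_n}$ are bounded by $1$ and integrate against the fixed continuous kernel $J$), while the Laplacian piece $\tfrac12\chi_{B_n}\Delta f$ is controlled by $\tfrac12\|\Delta f\|_\infty$ for $f\in C^2$.

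Concretely, for the process $X_n$ I would apply Aldous' tightness criterion: for every pair of stopping times $\sigma\le\tau\le\sigma+h$ bounded by $T$, the martingale decomposition gives
\begin{align}
\mathbb E\big[(g(X_n(\tau))-g(X_n(\sigma)))^2\big]
\le 2\,\mathbb E\Big[\Big(\int_\sigma^\tau \mathcal L_n g\,ds\Big)^2\Big]
+2\,\mathbb E\big[\langle M_n^g\rangle_\tau-\langle M_n^g\rangle_\sigma\big],
\end{align}
and both terms on the right are $O(h)$ uniformly in $n$ thanks to the uniform bounds on $\mathcal L_n g$ and on the integrand defining $N_n^g$ in \eqref{f2}. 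Taking $g$ to run over a countable set of $C^2$ functions separating points of $\overline\Omega$ (or the coordinate maps) then yields tightness of $(X_n)_n$ in $D([0,T],\overline\Omega)$. For the label process $(I_n(t))_t$, which takes values in the finite set $\{1,2\}$, the jump rates are bounded: a transition $1\to2$ or $2\to1$ can only occur at the jump times $\{\tau_k\}$ with intensity controlled by $\int_\Omega J\,dy=1$, so the expected number of label switches in any interval of length $h$ is $O(h)$ uniformly in $n$. This bounds the probability of many switches in a short interval and gives tightness of $(I_n)_n$ in $D([0,T],\{1,2\})$ by the same Aldous-type estimate applied to indicator test functions $f(x,i)=\mathbf 1_{\{i=1\}}$.

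The main obstacle I anticipate is making the uniform bound on the carré-du-champ and on the drift genuinely independent of $n$ despite the oscillating cutoffs $\chi_{A_n},\chi_{B_n}$: one must ensure that no estimate secretly depends on the geometry of the partition (such as $|\partial B_n|$ or the number of components $N=N(n)$). Here I would exploit precisely that all the problematic terms are integrals against the fixed kernel $J$ with bounded integrand, so the cutoffs only ever appear under an integral sign where $0\le\chi_{B_n},\chi_{A_n}\le1$ suffices; the Laplacian term is handled by choosing test functions in $C^2(\overline\Omega)$ globally, whose second derivatives are bounded independently of $n$. A secondary delicate point is the boundary behavior of the reflected Brownian part on $\partial B_n$, but since the test functions $g$ are smooth on all of $\overline\Omega$ the Neumann condition plays no role in these a priori bounds — the reflection only keeps $X_n$ inside $\overline\Omega$, which is already compact. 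Once both marginal families are shown to be tight, tightness of the joint law $(P_n)_n$ on the product space follows immediately.
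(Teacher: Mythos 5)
Your proposal follows essentially the same route as the paper: reduce to the two marginals via the product topology, use compactness of $\overline\Omega$ and $\{1,2\}$ for the containment condition, and verify Aldous' criterion through the martingale decompositions \eqref{f1}--\eqref{f2}, bounding the drift $\mathcal L_n g$ and the quadratic-variation integrand uniformly in $n$ because the cutoffs $\chi_{A_n},\chi_{B_n}$ only appear under integrals against the fixed kernel $J$. The only cosmetic difference is that the paper works directly with $g(x,i)=x$ and $h(x,i)=i$ (so the Laplacian term vanishes identically and no bound $\|\Delta f\|_\infty$ is ever needed), whereas you allow general $C^2$ test functions; both treatments handle the Neumann/reflection subtlety at the same level of rigor, by invoking the martingale property of \eqref{f1} for these test functions.
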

\begin{proof}
Let $P_n^1$ and $P_n^2$ the two marginals of $P_n$. Since $D\pare{[0, T], \overline\Omega}\times D\pare{[0, T], \brc{1,2}}$ is endowed with the product topology, in order to conclude, it is enough to show that the marginals $P_n^1$ and $P_n^2$ are tight. 

We start by proving that the sequence $P_n^1$ is tight.
By Theorem 1.3 and Proposition 1.6 of Chapter 4 in \cite{KL}, it is sufficient to show that the following conditions hold:
\begin{enumerate}
\item for every $ t\in [0,T]$ and $\epsilon>0$ there exists a compact set $K(t,\epsilon )\subseteq \overline\Omega$ such that 
$$\sup_{n} P_{n}^1\Big( X\pare{\cdot}: X\pare{t}\not\in K\pare{t,\epsilon} \Big)\leq \epsilon,$$
\item for every $ \epsilon>0$, we have that $$\lim_{\zeta\to 0}\limsup_{n\to +\infty} \sup_{\tau\in \Lambda_T, \theta\leq\zeta}  
P_{n}^1\Big( X\pare{\cdot}: \abs{X\pare{\tau+\theta}-X\pare{\tau}}>\epsilon \Big)=0,$$
where $\Lambda_T$ is the family of all stopping times bounded by $T$.
\end{enumerate}

The first condition is satisfied since $\overline{\Omega}$ is a compact space. To prove the second condition, fix $\tau\in \Lambda_T$, $\epsilon>0$ and observe that, considering the function $g\pare{x, i}=x$ in \eqref{f1}, we get that
$$
M_n^{g}\pare{t}=X_n\pare{t}-X_n\pare{0}-\int_0^t\mathcal L_n g\pare{X_n\pare{s}, I_n\pare{s}}ds.
$$
Therefore,
\begin{align}
\abs{X_n\pare{\tau+\theta}-X_n\pare{\tau}}\leq \abs{\int_\tau^{\tau+\theta}\mathcal L_n  g \pare{X_n\pare{s}, I_n\pare{s}}ds}+\abs{M_n^{g}\pare{\tau+\theta}-M_n^{g}\pare{\tau}}.
\end{align}
Since
\begin{align}
\mathcal L_ng\pare{x,i}=&\chi_{A_n}\pare{x} \chi_{1}\pare{i} \int_\Omega J\pare{x,y}\pare{{y}-{x}}dy
+\chi_{B_n}\pare{x} \chi_{2}\pare{i}\int_\Omega \chi_{A_n}\pare{y}J\pare{x,y}\pare{{y}-{x}}dy,
\end{align}
we have that
\begin{align}\label{c1}
\abs{\int_\tau^{\tau+\theta}\mathcal L_n g \pare{X_n\pare{s}, I_n\pare{s}}ds}\leq C_1\theta,
\end{align}
where $C_1:=2\norm{J}_\infty |\Omega|$. Moreover, by \eqref{f2} we get that
$$
\bb E\pare{\pare{M_n^{g}\pare{\tau+\theta}}^2-\pare{M_n^{g}\pare{\tau}}^2}
=\bb E\pare{\int_{\tau}^{\tau+\theta} \pare{\mathcal L_n\pare{g\pare{X_n\pare{s}, I_n\pare{s}}}^2-2g\pare{X_n\pare{s}, I_n(s)}\mathcal L_ng\pare{X_n\pare{s} I_n(s)}}ds}.
$$
Since
\begin{align}
&\mathcal L_n\pare{g\pare{x, i}}^2-2g\pare{x, i}\mathcal L_ng\pare{x ,i}\\
&\hspace{+25pt}= \chi_{A_n}\pare{x}\chi_{1}\pare{i}\int_\Omega J\pare{x,y}\pare{y^2-x^2}dy+  \chi_{B_n}\pare{x}\chi_{2}\pare{i}\int_\Omega \chi_{A_n}\pare{y}J\pare{x,y}\pare{y^2-x^2}dy \\
&\hspace{+40pt}-2x\cor{\chi_{A_n}\pare{x}\chi_{1}\pare{i}\int_\Omega J\pare{x,y}\pare{y-x}dy+\chi_{B_n}\pare{x}\chi_{2}\pare{i}\int_\Omega \chi_{A_n}\pare{y}J\pare{x,y}\pare{y-x}dy},
\end{align}
we obtain that
\begin{align}
&\bb E\pare{\pare{M_n^{g}\pare{\tau+\theta}}^2-\pare{M_n^{g}\pare{\tau}}^2}\leq C_2\theta,
\end{align}
with $C_2= 8\norm{J}_\infty\abs{\Omega}^3+1$. Therefore, by Markov's inequality 
\begin{equation}\label{c2}
\bb P\pare{\abs{M_n^{g}\pare{\tau+\theta}-M_n^{g}\pare{\tau}}>\epsilon}\leq \frac{\bb E\pare{\pare{M_n^{g}\pare{\tau+\theta}}^2-\pare{M_n^{g}\pare{\tau}}^2}}{\epsilon^2} \leq\frac{C_2\theta}{\epsilon^2},
\end{equation}
for all $\epsilon>0$.
The bounds \eqref{c1} and \eqref{c2} allow to conclude the second condition that guarantees the tightness of the sequence $P_n^1$.

We proceed now in a similar way to prove the tightness of the sequence $P_n^2$. As before it is enough to show that
\begin{enumerate}
\item for every $ t\in [0,T]$ and every $ \epsilon>0$ there exists a compact set $K(t,\epsilon )\subseteq\brc{1,2}$ such that 
$$\sup_{n} P_{n}^2\Big(I\pare{\cdot}: I\pare{t}\not\in K\pare{t,\epsilon}\Big)\leq \epsilon,$$
\item for every $ \epsilon>0$ it holds that $$\lim_{\zeta\to 0}\limsup_{n\to +\infty} \sup_{\tau\in L_T,\theta\leq\zeta}  
P_{n}^2\Big( I\pare{\cdot}: \abs{\Lambda\pare{\tau+\theta}-I\pare{\tau}}>\epsilon \Big) =0.$$
\end{enumerate}
The first condition is trivially satisfied taking $K(t, \epsilon)=\brc{1,2}$. Hence, we need to prove the second condition.
Considering the function $h\pare{x, i}=i$ in \eqref{f1}, we get that
$$
M_n^{h}\pare{t}=I_n\pare{t}-I_n\pare{0}-\int_0^t\mathcal L_n
h\pare{X_n\pare{s}, I_n\pare{s}}ds.
$$
Therefore
\begin{align}
\abs{I_n\pare{\tau+\theta}-I_n\pare{\tau}}\leq \abs{\int_\tau^{\tau+\theta}\mathcal L_nh\pare{X_n\pare{s}, I_n\pare{s}}ds}+
\abs{M_n^{h}\pare{\tau+\theta}-M_n^{h}\pare{\tau}}.
\end{align}
Since
$$
\mathcal L_nh\pare{x,i}=\chi_{A_n}\pare{x} \chi_{1}\pare{i} \int_\Omega \chi_{B_n}\pare{y} J\pare{x,y}dy
-\chi_{B_n}\pare{x} \chi_{2}\pare{i} \int_\Omega \chi_{A_n}\pare{y}J\pare{x,y}dy,
$$
we have that
\begin{align}\label{d1}
\abs{\int_\tau^{\tau+\theta}\mathcal L_n h \pare{X_n\pare{s}, I_n\pare{s}}ds}\leq C_3\theta,
\end{align}
where $C_3=2\norm{J}_\infty |\Omega|$. Moreover, by \eqref{f2}, we get that
\begin{align}
&\bb E\pare{\pare{M_n^{h}\pare{\tau+\theta}}^2-\pare{M_n^{h}\pare{\tau}}^2}\\
&\hspace{+15pt}=\bb E\pare{\int_{\tau}^{\tau+\theta} \pare{\mathcal L_n\pare{h \pare{X_n\pare{s}, I_n\pare{s}}}^2-2 h \pare{X_n\pare{s}, I_n(s)}\mathcal L_n h \pare{X_n\pare{s} I_n(s)}} ds  } \leq C_4\theta,
\end{align}
with $C_4=10 \norm{J}_\infty |\Omega|$. 
The last inequality follows from the fact that
\begin{align}
\mathcal L_n\pare{h\pare{x, i}}^2-2 h \pare{x, i}\mathcal L_nf_2\pare{x, i} = \, & 3\pare{\chi_{A_n}\pare{x}\chi_{1}\pare{i}-\chi_{B_n}\pare{x}\chi_{2}\pare{i}}\int_\Omega \chi_{A_n}\pare{y}J\pare{x,y}dy\\[10pt]
&\ -2i\cor{\chi_{A_n}\pare{x}\chi_{1}\pare{i}\int_\Omega \chi_{B_n}\pare{y}J\pare{x,y}dy-\chi_{B_n}\pare{x}\chi_{2}\pare{i}\int_\Omega \chi_{A_n}\pare{y}J\pare{x,y}dy}.
\end{align}
Finally, by Markov's inequality we get that
\begin{equation}\label{d2}
\bb P\pare{\abs{M_n^{h}\pare{\tau+\theta}-M_n^{h}\pare{\tau}}>\epsilon}\leq \frac{\bb E\pare{\pare{M_n^{h}\pare{\tau+\theta}}^2-\pare{M_n^{h}\pare{\tau}}^2}}{\epsilon^2} \leq\frac{C_4\theta}{\epsilon^2},
\end{equation}
for all $\epsilon>0$.
Bounds \eqref{d1} and \eqref{d2} allow to conclude the second condition that guarantees the tightness of the sequence $P_n^2$.
\end{proof}
Lemma \ref{lemma1} guarantees that the sequence of processes $\pare{X_n(t), I_n(t)}_{t\in [0, T]}$ converges in distribution along subsequences.
In the following theorem we prove that all subsequences converge to the same limit and we characterize the generator of the limit process.

\begin{theorem}\label{Prop:1}
The sequence $\pare{X_n\pare{t}, I_n\pare{t}}$ converges 
$$\pare{X_n\pare{t}, I_n\pare{t}}\xrightarrow[n\to +\infty]{D}\pare{X\pare{t}, I\pare{t}}$$ in $D\pare{[0, T], \overline \Omega}\times D\pare{[0, T], \brc{1,2}}$. 
The limit $\pare{X\pare{t}, I\pare{t}}$ is a Markov process 
 whose generator $\widetilde{\mathcal L}$ is defined on functions $f\in C\pare{\overline\Omega\times\brc{1,2}}$ as
\begin{align}
\widetilde{\mathcal L}f\pare{x, i}= \, &\chi_{1}\pare{i}\brc{\int_{\Omega}\pare{1-\theta\pare{y}}J\pare{x, y}\pare{f\pare{y, 1}-f\pare{x,1}}dy+\int_{\Omega}\theta\pare{y}J\pare{x, y}\pare{f\pare{y,2}-f\pare{x, 1}}dy}\\[10pt]
&+\chi_{2}\pare{i} \brc{\int_{\Omega}\pare{1-\theta\pare{y}} J\pare{x, y}\pare{f\pare{y, 1}-f\pare{x,2}}dy}.
\end{align}
\end{theorem}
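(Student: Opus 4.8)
The plan is to use the standard martingale-problem strategy. By Lemma~\ref{lemma1} the sequence $(P_n)_n$ is tight, so every subsequence has a further subsequence converging in distribution to some limit law $P$ on $D([0,T],\overline\Omega)\times D([0,T],\{1,2\})$. I will show that any such $P$ solves the martingale problem associated with $\widetilde{\mathcal L}$, and then that this martingale problem admits a unique solution; this forces all subsequential limits to coincide and upgrades the subsequential convergence to convergence of the whole sequence, proving \eqref{convd}, while simultaneously identifying the generator of the limit as $\widetilde{\mathcal L}$.

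To identify the limit I start from the martingales $M_n^{f}$ in \eqref{f1}. Given a test function $f\in C(\overline\Omega\times\{1,2\})$, I replace its second component $f(\cdot,2)$ on $B_n$ by the function equal, on each component $B_n^j$, to the average of $f(\cdot,2)$ over $B_n^j$ (viewing $f$ as constant in time and applying Lemma~\ref{lema-phi}); call the resulting pair $f_n$. The averaged second component is locally constant, hence lies in $C^2(B_n)$ with vanishing normal derivative, so $f_n\in\mathcal T_n$, and crucially $\chi_{B_n}(x)\Delta f_n(x,2)\equiv0$, so the Laplacian term is removed from $\mathcal L_n f_n$. By Lemma~\ref{lema-phi} one has $f_n\to f$ uniformly. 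The martingale property of $M_n^{f_n}$ then gives $\mathbb E[(M_n^{f_n}(t)-M_n^{f_n}(s))H]=0$ for $0\le s<t$ and every bounded continuous functional $H$ of the trajectory up to time $s$.

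The core step is to pass to the limit in $\int_s^t\mathcal L_nf_n(X_n(r),I_n(r))\,dr$. Since along the process $\chi_{A_n}(X_n(r))=\chi_{1}(I_n(r))$ and $\chi_{B_n}(X_n(r))=\chi_{2}(I_n(r))$, the outer characteristic functions equal $1$, and with the Laplacian gone $\mathcal L_nf_n(x,i)$ reduces to a function $F_n(x,i)$ built from the nonlocal integrals alone. I claim $F_n\to\widetilde{\mathcal L}f$ uniformly on $\overline\Omega\times\{1,2\}$. This is exactly the difficulty met in Theorem~\ref{th32}: for continuous $g$, the integrals $\int_\Omega\chi_{A_n}(y)J(x,y)g(y)\,dy$ converge to $\int_\Omega(1-\theta(y))J(x,y)g(y)\,dy$ \emph{uniformly in $x$}, because $\chi_{A_n}\rightharpoonup1-\theta$, $\chi_{B_n}\rightharpoonup\theta$ weakly while $J$ is uniformly continuous, so the weak convergence is tested against the compact family $\{J(x,\cdot)g(\cdot)\}_{x\in\overline\Omega}\subset L^1(\Omega)$; combined with $f_n\to f$ uniformly this gives the claim and also shows that $\widetilde{\mathcal L}f$ is continuous. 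With $F_n\to\widetilde{\mathcal L}f$ uniform and $(X_n,I_n)\to(X,I)$ in distribution, a Skorohod-representation argument yields $\int_s^tF_n(X_n(r),I_n(r))\,dr\to\int_s^t\widetilde{\mathcal L}f(X(r),I(r))\,dr$; together with $f_n(X_n(\cdot),I_n(\cdot))\to f(X(\cdot),I(\cdot))$ and the uniform boundedness of all terms, bounded convergence gives $\mathbb E[(M^f(t)-M^f(s))H]=0$, where $M^f$ is as in \eqref{f1} with $\mathcal L_n$ replaced by $\widetilde{\mathcal L}$. Hence $P$ solves the martingale problem for $\widetilde{\mathcal L}$.

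For uniqueness, note that $\widetilde{\mathcal L}$ is a \emph{bounded} linear operator on $C(\overline\Omega\times\{1,2\})$ — it involves no derivatives, only nonlocal integrals against the bounded kernel $J$ — so it generates a uniformly continuous semigroup and the associated martingale problem is well posed by classical theory. Thus every subsequential limit coincides with the unique Markov process generated by $\widetilde{\mathcal L}$, which establishes \eqref{convd} and the stated form of the generator. The identification of the one-time marginals with the densities $a,b$ then follows by writing the Kolmogorov forward equation dual to $\widetilde{\mathcal L}$, recognizing it as system~\eqref{sys1.intro}, and invoking the uniqueness of that system from Lemma~\ref{lemma:eu}. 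The main obstacle is the uniform convergence $F_n\to\widetilde{\mathcal L}f$: it requires eliminating the Laplacian through Lemma~\ref{lema-phi} (where the diameter hypothesis in \eqref{cond.sets} is indispensable) and controlling the weakly convergent products $\chi_{B_n}(x)\chi_{A_n}(y)J(x,y)$ by exploiting the continuity of $J$ in its two separate variables, now complicated by the randomness of the evaluation point $X_n(r)$ and the need for joint convergence with the remaining boundary terms of the martingale identity.
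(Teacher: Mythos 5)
Your proposal is correct and follows the paper's overall strategy — tightness from Lemma~\ref{lemma1}, the martingale-problem formulation, the approximation $f_n\in\mathcal T_n$ from Lemma~\ref{lema-phi} that kills the Laplacian, and well-posedness of the limit martingale problem (the paper simply cites Chapter 4 of \cite{EK}, where your observation that $\widetilde{\mathcal L}$ is bounded is indeed the reason the citation applies). The one genuine difference is how the error term $\mathcal L_{n}f_{n}-\widetilde{\mathcal L}f$ along the process is controlled. The paper uses Fubini together with Corollary~\ref{corol.densidad} to rewrite $\bb E^{P_{n_k}}\big[\int_{t_0}^{t}|\mathcal L_{n_k}f_{n_k}-\widetilde{\mathcal L}f_{n_k}|\,ds\big]$ as an integral against the density $u_{n_k}(x,s)\,dx\,ds$, then splits it into the four terms $\Upsilon^i_{n_k}$ and invokes the weak convergences $a_{n_k}\rightharpoonup a$, $b_{n_k}\rightharpoonup b$ of Theorem~\ref{th32} together with the uniform limits \eqref{mj1}--\eqref{mj2}. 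You instead observe that, since along the process $i=1+\chi_{B_n}(X_n)$, the generator difference is bounded in sup-norm over the state space, and that this sup-norm tends to zero by exactly the estimates \eqref{mj1}--\eqref{mj2} (weak-$*$ convergence of $\chi_{A_n}$, $\chi_{B_n}$ tested uniformly against the $L^1$-compact family $\{J(x,\cdot)g(\cdot)\}_x$). This is a real streamlining: it makes the probability density of $X_n$ (hence Lemma~\ref{lemma:uniqweak} and Corollary~\ref{corol.densidad}) and the weak limits of $a_{n_k},b_{n_k}$ unnecessary for this particular theorem, since the bound $\norm{g}_\infty(t-t_0)\sup_{x}|\cdots|\to 0$ is deterministic; your Skorohod-representation step is then only needed for the terms involving the fixed continuous function $\widetilde{\mathcal L}f$, where the paper instead passes the limit directly through the weak convergence $P_{n_k}\to\tilde P$ (both arguments gloss over the usual fixed-time-evaluation continuity issue in the Skorohod topology at the same level of rigor). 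One small caveat on your closing sentence, which goes beyond the statement of Theorem~\ref{Prop:1}: identifying the one-time marginals of the limit process with $a$ and $b$ requires uniqueness for the forward equation among \emph{measure-valued} solutions, i.e. Lemma~\ref{lemma:weaksys}, not merely the $L^2$ uniqueness of Lemma~\ref{lemma:eu}, since a priori the marginals $\mu_t(dx,i)$ are not known to have densities.
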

\begin{proof}
Lemma \ref{lemma1} implies that any subsequence of $P_n$ has a convergent sub-subsequence; it remains then to characterize all the limit points of the sequence $ P_n$. Let $ \tilde P$ be a limit point and $ P_{n_k}$ be a subsequence converging to $\tilde P$.
To prove the theorem it is enough to show that $\tilde P$ concentrates its mass on trajectories $\pare{X\pare{\cdot}, I\pare{\cdot}}$ such that,
\begin{align}
f\pare{X\pare{t}, I\pare{t}}-f\pare{X\pare{0}, I\pare{0}}-\int_0^t \widetilde{\mathcal L}f\pare{X\pare{s}, I\pare{s}}ds
\end{align}
is a martingale, for every $f\in C\pare{\overline\Omega\times\brc{1,2}, \bb R}$ and for every $t\in [0,T]$. This implies convergence of the entire sequence $P_n$ and caracterizes the limit $\tilde P$ as the law of the Markov process with generator $\widetilde{\mathcal L}$, we refer the reader to Chapter 4 in \cite{EK} for a deeper discussion of the issue.
Therefore, to conclude the proof we need to show that,
\begin{align}\label{pok}
\bb E^{\tilde P}\cor{g\pare{\pare{X\pare{s}, I\pare{s}}, 0\leq s\leq t_0}\pare{f\pare{X\pare{t}, I\pare{t}}-f\pare{X\pare{t_0}, I\pare{t_0}}-\int_{t_0}^t\widetilde{\mathcal L}f\pare{X\pare{s}, I\pare{s}}ds}}=0,
\end{align}
for every bounded continuous function $g:\mathcal D\pare{[0, T], \overline\Omega}\times \mathcal D\pare{[0, T], \brc{1,2}}\to \bb R$,  for every $f\in C\pare{\overline\Omega\times\brc{1,2}}$ and for every $0\leq t_0<t\leq T$. 

By the tightness proved in Lemma \ref{lemma1} we know that
\begin{align}\label{pouy}
\bb E^{\tilde P}&\cor{g\pare{\pare{X\pare{s}, I\pare{s}}, 0\leq s\leq t_0}\pare{f\pare{X\pare{t}, I\pare{t}}-f\pare{X\pare{t_0}, I\pare{t_0}}-\int_{t_0}^t \tilde{\mathcal L}f\pare{X\pare{s}, I\pare{s}}ds}}\\
&=\lim_{n_k\to +\infty}\bb E^{P^{n_k}}\pare{g\pare{\pare{X\pare{s}, I\pare{s}}, 0\leq s\leq t_0}\pare{f\pare{X\pare{t}, I\pare{t}}-f\pare{X\pare{t_0}, I\pare{t_0}}-\int_{t_0}^t \tilde{\mathcal L}f\pare{X\pare{s}, I\pare{s}}ds}}.
\end{align}
Using the approximation lemma, Lemma \ref{lema-phi}, we can take a sequence of functions $f_n\in \mathcal T_n$  such that $\Delta f_n(\cdot, i)=0$ for every $x\in B_n$ and
\begin{align}\label{unifconv}
\sup_{x, i}\abs{f_n\pare{x, i}-f\pare{x,i}}\xrightarrow[n\to +\infty]{}0.
\end{align}
To simplify the notation we define
\begin{align}
F(f, f_n, t)&:=f\pare{X\pare{t}, I\pare{t}}-f_n\pare{X\pare{t}, I\pare{t}}-f\pare{X\pare{t_0}, I\pare{t_0}}+f_n\pare{X\pare{t_0}, I\pare{t_0}}\\[10pt]
&\hspace{+25pt}+\int_{t_0}^t\cor{\widetilde{\mathcal L}f_n\pare{X\pare{s}, I\pare{s}}-\widetilde{\mathcal L}f\pare{X\pare{s}, I\pare{s}}}ds.
\end{align}
By the triangular inequality, we have
\begin{align}\label{ineq1}
&\abs{\bb E^{P^{n_k}}\pare{g\pare{\pare{X\pare{s}, I\pare{s}}, 0\leq s\leq t_0}\pare{f\pare{X\pare{t}, I\pare{t}}-f\pare{X\pare{t_0}, I\pare{t_0}}-\int_{t_0}^t\widetilde{\mathcal L}f\pare{X\pare{s}, I\pare{s}}ds}} }\\[10pt]
&\hspace{+20pt}\leq\abs{\bb E^{P_{n_k}}\pare{g\pare{\pare{X\pare{s}, I\pare{s}}, 0\leq s\leq t_0}\pare{f_{n_k}\pare{X\pare{t}, I\pare{t}}-f_{n_k}\pare{X\pare{t_0}, I\pare{t_0}}-\int_{t_0}^t\mathcal L_{n_k}f_{n_k}\pare{X\pare{s}, I\pare{s}}ds}}}\\[10pt]
&\hspace{+35pt}+ \abs{\bb E^{P^{n_k}}\pare{g\pare{\pare{X\pare{s}, I\pare{s}}, 0\leq s\leq t_0}F\pare{f, f_{n_k}, t}} }\\[10pt]
&\hspace{+35pt}+\abs{\bb E^{P_{n_k}}\pare{g\pare{\pare{X\pare{s}, I\pare{s}}, 0\leq s\leq t_0}\int_{t_0}^t\pare{\mathcal L_{n_k}f_{n_k}\pare{X\pare{s}, I\pare{s}}-\tilde{\mathcal L}f_{n_k}\pare{X\pare{s}, I\pare{s}}}ds}}.
\end{align}

Let us analyze the first term in the right hand side of \eqref{ineq1}. From \eqref{f1}, we have that
\begin{align}
f_{n_k}\pare{X\pare{t}, I\pare{t}}-f_{n_k}\pare{X\pare{t_0}, I\pare{t_0}}-\int_{t_0}^t\mathcal L_{n_k}f_{n_k}\pare{X\pare{s}, I\pare{s}}ds
\end{align}
is a martingale. Therefore, 
$$
\bb E^{P_{n_k}}\pare{g\pare{\pare{X\pare{s}, I\pare{s}}, 0\leq s\leq t_0}\pare{f_{n_k}\pare{X\pare{t}, I\pare{t}}-f_{n_k}\pare{X\pare{t_0}, I\pare{t_0}}-\int_{t_0}^t\mathcal L_{n_k}f_{n_k}\pare{X\pare{s}, I\pare{s}}ds}}
=0.
$$

Concerning the second term, we have
\begin{align}
\abs{\bb E^{P^{n_k}}\pare{g\pare{\pare{X\pare{s}, I\pare{s}}, 0\leq s\leq t_0}F\pare{f, f_{n_k}, t}}}\leq 6\norm{g}_\infty\norm{f-f_{n_k}}_\infty,
\end{align}
which, by \eqref{unifconv}, vanishes as $n_k\to 0$. Therefore, to conclude \eqref{pok} we just need to show that
\begin{align}
\lim_{n_k\to \infty}\abs{\bb E^{P_{n_k}}\pare{g\pare{\pare{X\pare{s}, I\pare{s}}, 0\leq s\leq t_0}\int_{t_0}^t\pare{\mathcal L_{n_k}f_{n_k}\pare{X\pare{s}, I\pare{s}}-\widetilde{\mathcal L}f_{n_k}\pare{X\pare{s}, I\pare{s}}}ds}}=0.
\end{align}
Since
\begin{align}
&\abs{\bb E^{P_{n_k}}\pare{g\pare{\pare{X\pare{s}, I\pare{s}}, 0\leq s\leq t_0}\int_{t_0}^t\pare{\mathcal L_{n_k}f_{n_k}\pare{X\pare{s}, I\pare{s}}-\widetilde{\mathcal L}f_{n_k}\pare{X\pare{s}, I\pare{s}}}ds}}\\[10pt]
&\hspace{+15pt}\leq\norm{g}_\infty\bb E^{P_{n_k}}\pare{\int_{t_0}^t\abs{\mathcal L_{n_k}f_{n_k}\pare{X\pare{s}, I\pare{s}}-\widetilde{\mathcal L}f_{n_k}\pare{X\pare{s}, I\pare{s}}}ds}
\end{align}
it is enough to prove that
\begin{align}\label{tesi3}
\lim_{n_k\to \infty}\bb E^{P_{n_k}}
\pare{\int_{t_0}^t\abs{\mathcal L_{n_k}f_{n_k}\pare{X\pare{s}, I\pare{s}}-\widetilde{\mathcal L}f_{n_k}\pare{X\pare{s}, I\pare{s}}}ds}=0.
\end{align}
Denoting by $\mathcal D:= \mathcal D\pare{[0, T], \overline\Omega}\times \mathcal D\pare{[0, T], \brc{1,2}}$ and using Fubini's theorem  we get
\begin{align}\label{equal1}
&\bb E^{P_{n_k}}\pare{\int_{t_0}^t\abs{\mathcal L_{n_k}f_{n_k}\pare{X\pare{s}, I\pare{s}}-\widetilde{\mathcal L}f_{n_k}\pare{X\pare{s}, I\pare{s}}}ds}\\
&\hspace{+15pt}=\int_{t_0}^t\int_{\mathcal D}\abs{\mathcal L_{n_k}f_{n_k}\pare{X\pare{s}, I\pare{s}}-\widetilde{\mathcal L}f_{n_k}\pare{X\pare{s}, I\pare{s}}}dP_{n_k}\pare{X, I}ds.
\end{align}
Observe that $I_n(s)=1+\chi_{{ B_n}}\pare{X_n(s)}$ and $\chi_{A_n}\pare{X_n(s)}=\chi_{1}\pare{I_n(s) }$. Then, recalling that  $u_n(x,s)$ is the probability density of the process $X_n(s)$, we get
\begin{align}
&\int_{t_0}^t\int_{\mathcal D}\abs{\mathcal L_{n_k}f_{n_k}\pare{X\pare{s}, I\pare{s}}-\widetilde{\mathcal L}f_{n_k}\pare{X\pare{s}, I\pare{s}}}dP_{n_k}\pare{X, I}ds\\
&\hspace{+15pt}=\int_{t_0}^t\int_{\Omega}\abs{\mathcal L_{n_k}f_{n_k}\pare{x, 1+\chi_{{B_{n_k}}}\pare{x}}-\widetilde{\mathcal L}f_{n_k}\pare{x, 1+\chi_{{B_{n_k}}}\pare{x}}}u_{n_k}\pare{x,s}dxds\\[10pt]
&\hspace{+15pt}\leq\int_{t_0}^t\int_\Omega\abs{\Upsilon_{n_k}^1\pare{x}+\Upsilon_{n_k}^2\pare{x}+\Upsilon_{n_k}^3\pare{x}+\Upsilon_{n_k}^4\pare{x}}u_{n_k}\pare{x, s}dxds,
\end{align}
where
\begin{align}
&\Upsilon_{n_k}^1\pare{x}=\chi_{A_{n_k}}\pare{x}\int_\Omega \chi_{A_n}\pare{y}J\pare{x,y}\pare{f_{n_k}\pare{y, 1}-f_{n_k}\pare{x, 1}}dy\\[10pt]
&\hspace{+100pt}- \chi_{A_{n_k}}\pare{x}\int_\Omega \pare{1-\theta\pare{y}}J\pare{x, y}\pare{f_{n_k}\pare{y,1}-f_{n_k}\pare{x,1}}dy,\\[10pt]
&\Upsilon_{n_k}^2\pare{x}=\chi_{A_{n_k}}\pare{x}\int_\Omega \chi_{B_{n_k}}\pare{y}J\pare{x,y}\pare{f_{n_k}\pare{y, 2}-f_{n_k}\pare{x, 1}}dy\\[10pt]
&\hspace{+100pt}- \chi_{A_{n_k}}\pare{x}\int_\Omega \theta\pare{y}J\pare{x, y}\pare{f_{n_k}\pare{y, 2}-f_{n_k}\pare{x, 1}}dy,\\[10pt]
&\Upsilon_{n_k}^3\pare{x}=\chi_{B_{n_k}}\pare{x}\int_\Omega \chi_{A_{n_k}}\pare{y}J\pare{x,y}\pare{f_{n_k}\pare{y,1}-f_{n_k}\pare{x,2}}dy\\[10pt]
&\hspace{+100pt}-\chi_{B_{n_k}}\pare{x} \int_\Omega \pare{1-\theta\pare{y}}J\pare{x, y}\pare{f_{n_k}\pare{y,1}-f_{n_k}\pare{x, 2}}dy,\\[10pt]
&\Upsilon_{n_k}^4\pare{x}=\frac{1}{2}\chi_{B_{n_k}}\pare{x}\Delta f_{n_k}\pare{x,2}.
\end{align}
Therefore, \eqref{tesi3} is proved once we show that
\begin{align}\label{fourp}
\lim_{n_k\to \infty}\int_{t_0}^t\int_{\Omega}\abs{\Upsilon_{n_k}^i\pare{x}}u_{n_k}(x, s)ds=0,\qquad\forall i\in \brc{1,2,3, 4}.
\end{align}
Since $u_n(x, s)\chi_{A_n}(x)=a_n(x,s)$, we get
\begin{align}\label{form1}
&\int_{t_0}^t\int_\Omega\abs{\Upsilon^1_{n_k}(x)}u_{n_k}\pare{x,s}dxds\\[10pt]
&\hspace{+0pt}=\int_{t_0}^t\int_{\Omega}\abs{\int_\Omega \chi_{A_{n_k}}\pare{y}J\pare{x,y}\pare{f_{n_k}\pare{y,1}-f_{n_k}\pare{x,1}}dy- \int_\Omega \pare{1-\theta\pare{y}}J\pare{x, y}\pare{f_{n_k}\pare{y,1}-f_{n_k}\pare{x,1}}dy}a_{n_k}\pare{x, s}dxds\\[10pt]
&\hspace{+0pt}\leq\int_{t_0}^t\int_{\Omega}\abs{\int_\Omega \pare{\chi_{A_{n_k}}\pare{y}-\pare{1-\theta(y)}}J\pare{x,y}f_{n_k}\pare{y,1}dy}a_{n_k}\pare{x, s}dxds\\[10pt]
&\hspace{+15pt}+\int_{t_0}^t\int_{\Omega}\abs{\int_\Omega \pare{\chi_{A_{n_k}}\pare{y}-\pare{1-\theta(y)}}J\pare{x,y}dy}\abs{f_{n_k}\pare{x, 1}}a_{n_k}\pare{x,s}dxds.
\end{align}
By the continuity of $J$, limit \eqref{unifconv} and the fact that $\chi_{A_n} (x) \rightharpoonup 1-\theta (x)$ (see \eqref{cond.sets}) we get
\begin{align}\label{mj1}
\sup_{x\in \Omega}\abs{\int_\Omega \Big\{\chi_{A_n}\pare{y}-\pare{1-\theta(y)}\Big\}f_{n_k}(y, 1)J\pare{x,y}dy }\xrightarrow[n\to +\infty]{}0,
\end{align}
and
\begin{align}\label{mj2}
\sup_{x\in \Omega}\abs{\int_\Omega \Big\{\chi_{A_n}\pare{y}-\pare{1-\theta(y)}\Big\}J\pare{x,y}dy}\xrightarrow[n\to +\infty]{}0.
\end{align}
Recall that $a_{n_k}(x,s)\rightharpoonup a(x,s)$ (see Theorem \ref{th32}) and that \eqref{unifconv} holds. By \eqref{mj1} and \eqref{mj2}, we obtain that
the right hand side of \eqref{form1} converges to $0$ as $n\to +\infty$.

Analogously, we have that
\begin{align}\label{form2}
&\int_{t_0}^t\int_\Omega\abs{\Upsilon^2_{n_k}(x)}u_{n_k}\pare{x,s}dxds\\[10pt]
&\hspace{+0pt}=\int_{t_0}^t\int_{\Omega}\abs{\int_\Omega \chi_{B_{n_k}}\pare{y}J\pare{x,y}\pare{f_{n_k}\pare{y, 2}-f_{n_k}\pare{x, 1}}dy- \int_\Omega \theta\pare{y}J\pare{x, y}\pare{f_{n_k}\pare{y, 2}-f_{n_k}\pare{x, 1}}dy}a_{n_k}\pare{x, s}dxds\\[10pt]
&\hspace{+0pt}\leq\int_{t_0}^t\int_{\Omega}\abs{\int_\Omega \pare{\chi_{B_{n_k}}\pare{y}-\theta(y)}J\pare{x,y}f_{n_k}\pare{y, 2}dy}a_{n_k}\pare{x, s}dxds
\\[10pt]
&\hspace{+15pt}+\int_{t_0}^t\int_{\Omega}\abs{\int_\Omega \pare{\chi_{A_{n_k}}\pare{y}-\theta(y)}J\pare{x,y}dy}\abs{f_{n_k}\pare{x, 1}}a_{n_k}\pare{x,s}dxds.
\end{align}
Arguing as before we can conclude that the right hand side of \eqref{form2} goes to zero.

The same ideas apply to 
\begin{align}\label{form3}
&\int_{t_0}^t\int_{\overline\Omega}\abs{\Upsilon^3_{n_k}(x)}u_{n_k}\pare{x,s}dxds\\[10pt]
&\hspace{+0pt}=\int_{t_0}^t\int_{\overline\Omega}\abs{\int_{\overline\Omega} \chi_{A_{n_k}}\pare{y}J\pare{x,y}\pare{f_{n_k}\pare{y, 1}-f_{n_k}\pare{x, 2}}dy- \int_{\overline\Omega} \pare{1-\theta\pare{y}}J\pare{x, y}\pare{f_{n_k}\pare{y, 1}-f_{n_k}\pare{x, 2}}dy}b_{n_k}\pare{x, s}dxds\\[10pt]
&\hspace{+0pt}\leq\int_{t_0}^t\int_{\overline\Omega}\abs{\int_{\overline\Omega} \pare{\chi_{A_{n_k}}\pare{y}-\pare{1-\theta(y)}}J\pare{x,y}f_{n_k}\pare{y, 1}dy}b_{n_k}\pare{x, s}dxds\\[10pt]
&\hspace{+15pt}+\int_{t_0}^t\int_{\overline\Omega}\abs{\int_{\overline\Omega} \pare{\chi_{A_{n_k}}\pare{y}-\pare{1-\theta(y)}}J\pare{x,y}dy}\abs{f_{n_k}\pare{x, 2}}b_{n_k}\pare{x,s}dxds.
\end{align}
Again we can conclude that the right hand side of \eqref{form3} converges to $0$ as $n_k\to \infty$.

Finally, observe that $\Upsilon^4_{n_k}\equiv 0$ because, by construction, $f_{n_k}$ is constant on $B_{n_k}^j$. 

This concludes the proof of \eqref{fourp}.
\end{proof}

We  can prove now the last statement of Theorem \ref{teo.main.intro}, i.e., that the distribution of the limit process $\pare{X(t), I(t)}$ is characterized by the densities $a(x, t)$ and $b(x, t)$.

First of all, observe that, for every measurable $E\subseteq \overline\Omega$,
\begin{align}
\bb P\pare{\pare{X_n(0), I_n(0)}\in E\times\brc{1}}&=\bb P\pare{X_n(0)\in E\cap A_n}\\
&=\int_{E\cap A_n}u_0(x)dx\\
&=\int_E u_0(x)\chi_{A_n}(x)dx\xrightarrow[n\to\infty]{}\int_E u_0(x)\pare{1-\theta\pare{x}}dx.
\end{align}
Therefore, by the tightness result proved in Lemma \ref{lemma1}, we can write
\begin{align}\label{ic1}
\bb P\pare{\pare{X(0), I(0)}\in E\times\brc{1}}=\int_E u_0(x)\pare{1-\theta\pare{x}}dx.
\end{align}
Analogously, we get that
\begin{align}\label{ic2}
\bb P\pare{\pare{X(0), I(0)}\in E\times\brc{2}}=\int_E u_0(x)\theta\pare{x}dx.
\end{align}

Let $\mu(dx, i)=\pare{\mu_t(dx, i)}_{t\in [0, T]}$ be the law of the limit process $\pare{X(t), I(t)}_{t\in [0, T]}$. We can decompose
\begin{align}\label{decmea}
\mu_t(dx, i)=\bb \chi_{1}(i)\mu_t(dx, 1)+\bb \chi_{2}(i)\mu_t(dx, 2)
\end{align}
where, by \eqref{ic1} and \eqref{ic2}, $\pare{\mu_t(dx, 1)}_{t\in [0, T]}$ and $\pare{\mu_t(dx, 2)}_{t\in [0, T]}$ are such that
\begin{align}\label{ic}
\mu_0(dx, 1)=u_0(x)\pare{1-\theta\pare{x}}dx \quad \text{ and }\quad \mu_0(dx, 2)=u_0(x)\theta(x)dx.
\end{align}
Since $\widetilde{\mathcal L}$ is the generator of the process $\pare{X(t), I(t)}$ (see Theorem \ref{Prop:1}), by Lemma A.5.1 of \cite{KL} we can conclude that 
\begin{align}\label{endp}
&\displaystyle \frac{\partial}{\partial t}\sum_{i=1}^2\int_{\overline\Omega}f(x, i)\mu_t(dx, i)=\sum_{i=1}^2\int_{\overline\Omega}\widetilde{\mathcal L}f(x, i)\mu_t(dx, i),
\end{align}
for all bounded $f:\overline\Omega\times \brc{1,2}\to \bb R$.
Therefore, fixing $g\in C\pare{\overline\Omega}$ and choosing $f(x, i)=g(x)\chi_{1}(i)$, we get
\begin{align}\label{eqm1}
\frac{\partial }{\partial t}\int_{\overline\Omega}g(x)\mu_t(dx, 1)&=\int_{\overline\Omega}\int_{\overline\Omega}\pare{1-\theta\pare{y}}J\pare{x, y}\pare{g\pare{y}-g\pare{x}}dy\;\mu_t(dx,1)\\[10pt]
&\qquad -\int_{\overline\Omega}\int_{\overline\Omega}\theta\pare{y}J\pare{x, y}g\pare{x}dy\;\mu_t\pare{dx, 1}\\[10pt]
&\qquad +\int_{\overline\Omega}\int_{\overline\Omega}\pare{1-\theta\pare{y}} J\pare{x, y}g\pare{y}dy\;\mu_t(dx, 2).
\end{align}
Choosing  $f(x, i)=g(x)\chi_{2}(i)$ we get
\begin{align}\label{eqm2}
\frac{\partial }{\partial t} \int_{\overline\Omega}g(x)\mu_t(dx, 2)&=\int_{\overline\Omega}\int_{\overline\Omega}\theta\pare{y}J\pare{x, y}g\pare{y}dy\;\mu_t\pare{dx, 1}\\[10pt]
&\qquad -\int_{\overline\Omega}\int_{\overline\Omega}\pare{1-\theta\pare{y}} J\pare{x, y}g\pare{x}dy\;\mu_t(dx, 2).
\end{align}
We analyse now the right hand side of \eqref{eqm1}. Since $J$ is symmetric, by a change of variables, we can write
\begin{align}\label{adj1}
\int_{\overline\Omega}\int_{\overline\Omega}\pare{1-\theta\pare{y}}J\pare{x, y}\pare{g\pare{y}-g\pare{x}}dy\;\mu_t\pare{dx, 1} &=\int_{\overline\Omega}\int_{\overline\Omega}\pare{1-\theta(x)}J(x,y)g(x)dx\;\mu_t\pare{dy, 1}\\
 &\hspace{+15pt}-\int_{\overline\Omega}\int_{\overline\Omega}\pare{1-\theta(y)}J(x,y)g(x)dy\;\mu_t\pare{dx, 1}.
\end{align}
Moreover, it holds that
\begin{align}\label{adj3}
&\int_{\overline\Omega}\int_{\overline\Omega}\pare{1-\theta\pare{y}} J\pare{x, y}g\pare{y}dy\;\mu_t(dx, 2)=\int_{\overline\Omega}\int_{\overline\Omega}\pare{1-\theta\pare{x}} J\pare{x, y}g\pare{x}dx\;\mu_t(dy, 2).
\end{align}
Replacing \eqref{adj1} and \eqref{adj3} in the right hand side of \eqref{eqm1} we obtain
\begin{align}\label{wqer}
\frac{\partial }{\partial t} \int_{\overline\Omega}g(x)\mu_t(dx, 1)&=\int_{\overline\Omega}\int_{\overline\Omega}g\pare{x}J(x,y)\pare{1-\theta(x)}dx\;\mu_t\pare{dy, 1}\\[10pt]
 &\hspace{+15pt}-\int_{\overline\Omega}\int_{\overline\Omega}g(x)J(x,y)dy\;\mu_t\pare{dx, 1}\\[10pt]
&\hspace{+15pt}+\int_{\overline\Omega}\int_{\overline\Omega}g\pare{x}J\pare{x, y}\pare{1-\theta\pare{x}} dx\;\mu_t(dy, 2).
\end{align}
As before, via a change of variable in the right hand side of \eqref{eqm2}, we can write
\begin{align}\label{adj2}
\frac{\partial }{\partial t} \int_{\overline\Omega}g(x)\mu_t(dx, 2)&=\int_{\overline\Omega}\int_{\overline\Omega}\theta\pare{x}J\pare{x, y}g\pare{x}dx\;\mu_t\pare{dy, 1}\\[10pt]
&\hspace{+15pt}-\int_{\overline\Omega}\int_{\overline\Omega}\pare{1-\theta\pare{y}} J\pare{x, y}g\pare{x}dy\;\mu_t(dx, 2).
\end{align}
Moreover, by \eqref{ic} we get that
\begin{align}\label{ic3}
\int_{\overline\Omega}g(x)\mu_0(dx, 1)=\int_{\overline\Omega}g(x)u_0(x)\pare{1-\theta\pare{x}}dx\quad \text{and} \quad \int_{\overline\Omega}g(x)\mu_0(dx, 2)=\int_{\overline\Omega}g(x)u_0(x)\theta\pare{x}dx.
\end{align}
By Lemma \ref{lemma:weaksys} below we know that there exists a unique pair of trajectories of measures $\pare{\mu_t\pare{dx, 1}, \mu_t\pare{dx, 2}}_{t\in [0,T]}$ which, for every $g\in C\pare{\overline\Omega}$, satisfies \eqref{wqer}, \eqref{adj2} and \eqref{ic2}.
Such pair is given by $\mu_t\pare{dx, 1}=a(x, t)dx$ and 
$\mu_t\pare{dx, 2}=b(x, t)dx$, where the couple $\pare{a(x, t), b(x, t)}$ is the unique solution to system \eqref{sys1.intro}. This concludes the proof of Theorem \ref{teo.main.intro}.

\begin{lemma}\label{lemma:weaksys}
There exists a unique pair $\pare{\mu_t\pare{dx, 1}, \mu_t\pare{dx, 2}}$ such that, for every $g\in C\pare{\overline\Omega}$, it holds that
\begin{align}\label{sys12.11}
\begin{cases}
\displaystyle \frac{\partial }{\partial t} \int_{\overline\Omega}g(x)\mu_t(dx, 1)& \displaystyle =\int_{\overline\Omega}\int_{\overline\Omega}g\pare{x}J(x,y)\pare{1-\theta(x)}dx\;\mu_t\pare{dy, 1}\\[10pt]
 & \displaystyle\hspace{+15pt}-\int_{\overline\Omega}\int_{\overline\Omega}g(x)J(x,y)dy\;\mu_t\pare{dx, 1}\\[10pt]
& \displaystyle \displaystyle\hspace{+15pt}+\int_{\overline\Omega}\int_{\overline\Omega}g\pare{x}J\pare{x, y}\pare{1-\theta\pare{x}} dx\;\mu_t(dy, 2),
\end{cases}
\end{align}
\begin{align}\label{sys12.22}
\begin{cases}
\displaystyle \frac{\partial }{\partial t} \int_{\overline\Omega}g(x)\mu_t(dx, 2)& \displaystyle=\int_{\overline\Omega}\int_{\overline\Omega}\theta\pare{x}J\pare{x, y}g\pare{x}dx\;\mu_t\pare{dy, 1}\\[10pt]
&\displaystyle\hspace{+15pt}-\int_{\overline\Omega}\int_{\overline\Omega}\pare{1-\theta\pare{y}} J\pare{x, y}g\pare{x}dy\;\mu_t(dx, 2),
\end{cases}
\end{align}
with 
\begin{align}\label{sys12.33}
&\displaystyle \int_{\overline\Omega}g(x)\mu_0(dx, 1)=\int_{\overline\Omega}g(x)u_0(x)\pare{1-\theta\pare{x}}dx \quad \mbox{ and } \quad 
\int_{\overline\Omega}g(x)\mu_0(dx, 2)=\int_{\overline\Omega}g(x)u_0(x)\theta\pare{x}dx.
\end{align}

Such solution is given by $\pare{\mu_t\pare{dx, 1}, \mu_t\pare{dx, 2}}=\pare{a(x, t)dx, b(x, t)dx}$, where the pair $\pare{a(x, t), b(x, t)}$ is the unique solution to system \eqref{sys1.intro}.
\end{lemma}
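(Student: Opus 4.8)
The plan is to prove the two assertions separately: existence by exhibiting the explicit solution coming from Lemma~\ref{lemma:eu}, and uniqueness by a Gronwall argument carried out directly at the level of the total variation norms of the measures.

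For existence I would take the pair $\pare{a\pare{x,t},b\pare{x,t}}$ given by Lemma~\ref{lemma:eu} and set $\mu_t\pare{dx,1}=a\pare{x,t}dx$ and $\mu_t\pare{dx,2}=b\pare{x,t}dx$. Testing the two equations of \eqref{sys1.intro} against an arbitrary $g\in C\pare{\overline\Omega}$, integrating in $x$, and using the symmetry of $J$ together with $\int_\Omega J\pare{x,y}dy=1$ to rearrange the double integrals, one recovers exactly \eqref{sys12.11} and \eqref{sys12.22}, while the initial data \eqref{sys12.33} are precisely those of \eqref{sys1.intro}. This is the computation displayed in \eqref{adj1}--\eqref{adj2} read backwards, so nothing new is required here.

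For uniqueness, suppose $\pare{\mu_t^1,\mu_t^2}$ and $\pare{\tilde\mu_t^1,\tilde\mu_t^2}$ are two solutions and put $\omega_t^i=\mu_t^i-\tilde\mu_t^i$, a trajectory of signed measures with $\omega_0^1=\omega_0^2=0$. Fix $g\in C\pare{\overline\Omega}$ with $\norm{g}_{\infty}\le 1$. Integrating \eqref{sys12.11} in time and using $\int_\Omega J\pare{x,y}dy=1$ to simplify the loss term gives
\begin{align}
\int_{\overline\Omega}g\,\omega_t^1\pare{dx}=\int_0^t\cor{\int_{\overline\Omega}\phi_1\,\omega_s^1\pare{dy}-\int_{\overline\Omega}g\,\omega_s^1\pare{dx}+\int_{\overline\Omega}\phi_1\,\omega_s^2\pare{dy}}ds,
\end{align}
with $\phi_1\pare{y}=\int_{\overline\Omega}g\pare{x}\pare{1-\theta\pare{x}}J\pare{x,y}dx$. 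The crucial observation is that, by the symmetry and normalization of $J$, one has $\int_\Omega J\pare{x,y}dx=1$ for every $y$; combined with $0\le 1-\theta\le 1$ this yields $\norm{\phi_1}_{\infty}\le\norm{g}_{\infty}$. The same bound holds for the coefficients $\phi_2\pare{y}=\int_\Omega\theta\pare{x}J\pare{x,y}g\pare{x}dx$ and $\psi\pare{x}=\int_\Omega\pare{1-\theta\pare{y}}J\pare{x,y}dy$ appearing in \eqref{sys12.22}. Writing $m_i\pare{t}=\norm{\omega_t^i}_{TV}=\sup_{\norm{g}_\infty\le 1}\abs{\int_{\overline\Omega}g\,\omega_t^i\pare{dx}}$, bounding each integral on the right by the corresponding total variation, and taking the supremum over $\norm{g}_{\infty}\le 1$, I obtain
\begin{align}
m_1\pare{t}\le\int_0^t\pare{2m_1\pare{s}+m_2\pare{s}}ds,\qquad m_2\pare{t}\le\int_0^t\pare{m_1\pare{s}+m_2\pare{s}}ds.
\end{align}
Adding these and applying Gronwall's inequality with $m_1\pare{0}=m_2\pare{0}=0$ forces $m_1\equiv m_2\equiv 0$, hence $\omega_t^1=\omega_t^2=0$ for all $t$; this gives uniqueness and identifies the solution with $\pare{a\,dx,b\,dx}$.

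The hard part will be the measure-theoretic bookkeeping in the uniqueness step: one must check that $s\mapsto\int_{\overline\Omega}g\,\omega_s^i\pare{dx}$ is absolutely continuous so that the integrated identity is legitimate, that $g\mapsto\tfrac{d}{ds}\int_{\overline\Omega}g\,\omega_s^i$ is represented by a finite signed measure whose mass is controlled by $m_i\pare{s}$, and that $s\mapsto m_i\pare{s}$ is bounded and measurable on $[0,T]$ (the latter because $m_i$ is a supremum over a countable dense subset of the unit ball of $C\pare{\overline\Omega}$, hence lower semicontinuous). A cleaner alternative, closer in spirit to Lemma~\ref{lemma:uniqweak}, is a duality argument: since the local part has disappeared in the limit, the generator $\widetilde{\mathcal L}$ is a \emph{bounded} operator on $C\pare{\overline\Omega\times\brc{1,2}}$, so for each $f\in C\pare{\overline\Omega\times\brc{1,2}}$ and each $t$ the backward problem $\partial_sF+\widetilde{\mathcal L}F=0$, $F\pare{\cdot,t}=f$, is solved explicitly by $F\pare{\cdot,s}=e^{\pare{t-s}\widetilde{\mathcal L}}f$; inserting $F$ into the weak formulation \eqref{endp} for the difference annihilates the right-hand side and forces $\sum_{i}\int_{\overline\Omega}f\pare{x,i}\,\omega_t\pare{dx,i}=0$ for every $f$, whence $\omega_t\equiv 0$.
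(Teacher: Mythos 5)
Your proposal is correct and follows essentially the same route as the paper: existence by identifying $\pare{a(x,t)dx, b(x,t)dx}$ as a solution, and uniqueness by integrating the weak equations in time, estimating in the total variation norm $\norm{\cdot}_{\text{TV}}$, and applying Gronwall's inequality to the sum of the two norms. Your version is in fact slightly sharper (you exploit the symmetry and normalization $\int_\Omega J(x,y)\,dy=1$ to get constants bounded by $1$, where the paper uses the cruder bound $2\norm{J}_\infty\abs{\Omega}^2$) and more careful about the measurability of $s\mapsto\norm{\omega_s}_{\text{TV}}$, which the paper passes over in silence.
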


\begin{proof}
The fact that the pair $\pare{a(x, t)dx, b(x, t)dx}$ is a solution of system \eqref{sys12.11}--\eqref{sys12.22}--\eqref{sys12.33} is a consequence of Theorem \ref{th32}. We prove now the uniqueness. Suppose that there exist two pairs $\pare{\nu_t\pare{dx, 1}, \nu_t\pare{dx, 2}}$ and $\pare{\tilde\nu_t\pare{dx, 1}, \tilde\nu_t\pare{dx, 2}}$ for which system \eqref{sys12.11}--\eqref{sys12.22}--\eqref{sys12.33} is satisfied. Let 
$$
\omega_t(dx, 1):=\nu_t\pare{dx, 1}-\tilde\nu_t\pare{dx, 1}\qquad \mbox{ and }\qquad \omega_t(dx, 2):=\nu_t\pare{dx, 2}-\tilde\nu_t\pare{dx, 2}.
$$
Therefore, we know that, for all $g\in C(\overline\Omega)$, 
\begin{align}\label{sys13}
\displaystyle \int_{\overline\Omega}g(x)\omega_t(dx, 1)&=\int_0^t\int_{\overline\Omega}\int_{\overline\Omega}g\pare{x}J(x,y)\pare{1-\theta(x)}dx\;\omega_s\pare{dy, 1}ds\\[10pt]
 &\hspace{+15pt}-\int_0^t\int_{\overline\Omega}\int_{\overline\Omega}g(x)J(x,y)dy\;\omega_s\pare{dx, 1}ds\\[10pt]
&\displaystyle\hspace{+15pt}+\int_0^t\int_{\overline\Omega}\int_{\overline\Omega}g\pare{x}J\pare{x, y}\pare{1-\theta\pare{x}} dx\;\omega_t(dy, 2)ds
\end{align}
and
\begin{align}\label{sys14}
\int_{\overline\Omega}g(x)\omega_t(dx, 2)&=\int_0^t\int_{\overline\Omega}\int_{\overline\Omega}\theta\pare{x}J\pare{x, y}g\pare{x}dx\;\omega_s\pare{dy, 1}ds\\[10pt]
&\displaystyle\hspace{+15pt}-\int_0^t\int_{\overline\Omega}\int_{\overline\Omega}\pare{1-\theta\pare{y}} J\pare{x, y}g\pare{x}dy\;\omega_s(dx, 2)ds,
\end{align}
with initial conditions
\begin{align}\label{sys15}
\int_{\overline\Omega}g(x)\omega_0(dx, 1)=\int_{\overline\Omega}g(x)\omega_0(dx, 2)=0.
\end{align}

In what follows, for all measures $\mu$ on $\overline\Omega$, we denote by $$\displaystyle{\norm{\mu}_{\text{TV}}:=\sup_{g\in C\pare{\overline\Omega}: \norm{g}_\infty\leq 1}\int_{\overline\Omega}g(x)\mu(dx)},$$ the dual norm (total variation) of $\mu$. From \eqref{sys13} and \eqref{sys15} we get
\begin{align}\label{gron1}
\displaystyle \norm{\omega_t(dx, 1)}_{\text{TV}}=\sup_{g\in C\pare{\overline\Omega}: \norm{g}_\infty\leq 1}\bigg\{&\int_0^t\int_{\overline\Omega}\int_{\overline\Omega}g\pare{x}J(x,y)\pare{1-\theta(x)}dx\;\omega_s\pare{dy, 1}ds\\[10pt]
 &\hspace{+15pt}-\int_0^t\int_{\overline\Omega}\int_{\overline\Omega}g(x)J(x,y)dy\;\omega_s\pare{dx, 1}ds\\[10pt]
&\displaystyle\hspace{+15pt}+\int_0^t\int_{\overline\Omega}\int_{\overline\Omega}g\pare{x}J\pare{x, y}\pare{1-\theta\pare{x}} dx\;\omega_t(dy, 2)ds\bigg\}\\[10pt]
&\hspace{-80pt}\leq C\int_0^t\pare{\norm{\omega_s(dx, 1)}_{\text{TV}}+\norm{\omega_s(dx, 2)}_{\text{TV}}}ds ,
\end{align}
where $C=2\norm{J}_\infty\abs{\Omega}^2$. Analogously, by \eqref{sys14} and \eqref{sys15}, we obtain
\begin{align}\label{gron2}
\displaystyle \norm{\omega_t(dx, 2)}_{\text{TV}}\leq C\int_0^t\pare{\norm{\omega_s(dx, 1)}_{\text{TV}}+\norm{\omega_s(dx, 2)}_{\text{TV}}}ds.
\end{align}
Now, from \eqref{gron1} and \eqref{gron2}, using Gronwall's inequality, we conclude that
\begin{align}
\displaystyle \norm{\omega_t(dx, 1)}_{\text{TV}}+\norm{\omega_t(dx, 2)}_{\text{TV}}=0.
\end{align}
Therefore, $\omega_t(dx, 1)$ and $\omega_t(dx, 2)$ coincide with the null measure on $\overline \Omega$ and consequently $\nu_t(dx, i)=\tilde \nu_t(dx, i)$, for $i\in \brc{1,2}$. This concludes the proof.
\end{proof}

\begin{remark} {\rm It holds that
$$\pare{X_n(t)}_{t\in [0, T]}\xrightarrow[n\to \infty]{D}\pare{X(t)}_{t\in [0, T]},$$ where $X(t)$ has probability density $$u(x, t)=a(x, t)+b(x, t).$$ 
Indeed, the convergence in distribution to the process $\pare{X(t)}_{t\in [0, T]}$ is a consequence of \eqref{convd}. 
Moreover, since $X(t)$ is the marginal in the first variable of $\pare{X(t), I(t)}$, we can write
\begin{align}
\displaystyle \bb P\pare{X(t)\in E}&=\sum_{i=1}^2\bb P\pare{X(t)\in E, I(t)=i}=\sum_{i=1}^2\int_E\mu_t\pare{dx, i}=\int_E\pare{a(x, t)+b(x, t)}dx
=\int_E u(x, t) dx,
\end{align}
for every measurable set $E\subseteq \overline\Omega$.}
\end{remark}

\section{Examples} \label{sect-generaliz}

In this section we collect some simple examples that illustrate our main result.

\subsection{The case $\theta$ constant.} \label{sect-1-d}

First, we deal with the simplest case in which the limit $\theta$ is just a constant $\theta =k$. In this case 
the limit system reads as,
\begin{equation}\label{sys1.k.11}
\left\{
\begin{array}{ll}
\displaystyle \frac{\partial a}{\partial t}\pare{x, t}=\int_{\Omega}J\pare{x, y}\pare{a\pare{y,t}-a\pare{x,t}}dy 
-k\int_{\Omega}J\pare{x, y}a\pare{y,t}dy+(1-k) \int_{\Omega}J\pare{x,y}b\pare{y,t}dy \quad & x\in \Omega , \, t>0,\\[10pt]
\displaystyle \frac{\partial b}{\partial t}\pare{x, t}= k \int_{\Omega} J\pare{x, y}a\pare{y,t}dy- (1-k) b\pare{x,t}\int_{\Omega} 
J\pare{x, y} dy
\quad & x\in \Omega , \, t>0,\\[10pt]
a\pare{x, 0}=\pare{1- k }u_0\pare{x}, \quad b\pare{x, 0}=k u_0\pare{x}
\qquad & x\in \Omega.
\end{array} \right.
\end{equation}

In the special case $k=1/2$ we get
\begin{equation}\label{sys1.k.}
\left\{
\begin{array}{ll}
\displaystyle \frac{\partial a}{\partial t}\pare{x, t}= \frac12 \int_{\Omega}J\pare{x, y}\pare{a\pare{y,t}-a\pare{x,t}}dy 
-\frac12 a(x,t) \int_{\Omega}J\pare{x, y} dy+ \frac12 \int_{\Omega}J\pare{x,y}b\pare{y,t}dy  & x\in \Omega , \, t>0,\\[10pt]
\displaystyle \frac{\partial b}{\partial t}\pare{x, t}= \frac12 \int_{\Omega} J\pare{x, y}a\pare{y,t}dy- \frac12 b\pare{x,t}\int_{\Omega} 
J\pare{x, y} dy
\quad & x\in \Omega , \, t>0,\\[10pt]
\displaystyle a\pare{x, 0}= \frac12 u_0\pare{x}, \quad b\pare{x, 0}= \frac12  u_0\pare{x}
\qquad & x\in \Omega.
\end{array} \right.
\end{equation}

{\bf The $1-$d case.}

As a simple case in which we have $\theta =k $ is in the one-dimensional case when $A_n$, $B_n$ are constructed
alternating small intervals.

Fix $n\in \bb N$ and divide the interval $[0,1]$ into $n$ subintervals that we call $\{I_n^j\}_{j\in \brc{1, \ldots, n}}$, each one of the same 
length, $\frac{1}{n}$. For each $j\in\{1, \ldots, n\}$ the subinterval $I_n^j$ is written as $I_n^j=A_n^j\cup B_n^j$ where $A_n^j$ and $B_n^j$ are disjoint sets such that 
$|{A_n^j}|=\frac{1-k}{n}$ and $|{B_n^j}|=\frac{k}{n}$, with $k\in (0,1)$ a fixed number (we use the same proportion 
between $A_n$ and $B_n$ inside every interval).

To obtain $k=1/2$ we just divide $[0,1]$ into
small intervals of the same length and collect the even ones as $A_n$ and the odd ones as $B_n$.

\begin{remark} {\rm
Just as a curiosity, we observe in this configuration, $[0,1] = A_n \cup B_n$, as above, 
we can obtain that the functions in the approximating sequence given in Lemma \ref{lema-phi}
 can be taken to be continuous (as we already observed in Remark \ref{rem.cont}). In this simple 1-d case the
 construction can be made explicit. We drop the dependence on $t$ to simplify the notation.
Consider a non decreasing function $h: [0,1] \mapsto [0,1]$, $h\in C^\infty\pare{[0,1]}$ for which there exists $\mathcal E_0$ and $\mathcal E_1$, neighbourhoods of $0$ and $1$ respectively, such that $h_{|\mathcal E_0}\equiv 0$ and $h_{|\mathcal E_1}\equiv 1$. 
Fix $\phi \in C\pare{[0,1], \mathbb R}$. We can define the approximating functions $\phi_n$ as follows
\begin{align}
\phi_n\pare{x}=
\begin{cases}
\displaystyle h\pare{\frac{x-\frac{j-u}{n}}{\frac{u}{n}}}\kint_{B^n_{j+1}}\phi(y)dy+\pare{1-h\pare{\frac{x-\frac{j-u}{n}}{\frac{u}{n}}}}\vint_{B^n_{j}}\phi(y)dy\quad\text{if }x\in A_n^j,\\[5pt]
\displaystyle \kint_{B_n^j}\phi(x)dx\hspace{+245pt}\text{if }x\in B^n_j.
\end{cases}
\end{align}
By definition the sequence of functions $\pare{\phi_n}_{n\in\bb N}$ is such that $$\sup_{n\in \bb N}\norm{\phi_n}_\infty\leq \norm{\phi}_\infty\pare{1+2\norm{h}_\infty}.$$ Let us show that 
\begin{align}\label{ucv33}
\norm{\phi-\phi_n}_\infty\xrightarrow[n\to +\infty]{}0.
\end{align}
As before, we have that since the function $\phi$ is uniformly continuous in the interval $[0,1]$ then for every $\epsilon>0$ there exists $\delta_\epsilon$ such that 
\begin{align}\label{acf22}
\abs{\phi(x)-\phi(y)}<\epsilon, \qquad \forall x,y :\abs{x-y}<\delta_\epsilon.
\end{align}
Fix $\epsilon>0$ and take $x\in \mathcal B_n$, consequently there exists $j\in\brc{1, \ldots, N}$ such that $x\in B_n^j$. Therefore, as we did before, we have
\begin{align}\label{pl1}
\abs{\phi_n\pare{x}-\phi\pare{x}} \displaystyle = \abs{\kint_{B_n^j}\phi(z)dz-\phi(x)} \leq \epsilon,
\end{align}
for every $n\geq \frac{1}{\delta_\epsilon}$. 
Instead if $x\in \mathcal A_n$ there exists $j\in\brc{1, \ldots, N}$ such that $x\in A^j_n$. Therefore,
\begin{align}\label{pl2}
\abs{\phi_n\pare{x}-\phi\pare{x}}&\leq\abs{\phi_n\pare{x}-\phi_n\pare{\frac{j}{n}}}+\abs{\phi_n\pare{\frac{j}{n}}-\phi\pare{x}}\\[10pt]
&\leq {\abs{\phi_n\pare{\frac{j+1}{n}}-\phi_n\pare{\frac{j}{n}}}}+\abs{\avint_{B^n_j}\phi(z)dz-\phi(x)}\\[10pt]
&\displaystyle =\abs{\kint_{B_n^{j+1}}{\phi(z)}-\kint_{B_n^{j}}{\phi(z)}}dz+\abs{\kint_{B_n^j}\phi(z)dz-\phi(x)}\\[10pt]
&\displaystyle \leq \kint_{B_n^{j}}\abs{\phi(z)-\phi\pare{z+\frac{1}{n}}}dz+\kint_{B_n^j}\abs{\phi(z)-\phi(x)}dz\\[10pt]
&\leq \epsilon,
\end{align}
for all $n>\delta_{\frac{\epsilon}{2}}$.
By \eqref{pl1} and \eqref{pl2} we get that there exists $n_0\pare{\epsilon}$ such that $\forall n\geq n_0\pare{\epsilon}$ 
\begin{align}\label{moni2}
\norm{\phi_n-\phi}_\infty<\epsilon.
\end{align}
}
\end{remark}

{\bf The chessboard case.} 

Now we consider in $\mathbb{R}^2$ the set $\overline\Omega = [0,1]\times [0,1]$ and divide (as we did in 1-d) $[0,1]$ into
$n$ subintervals that we call $\{I_n^j\}_{j\in \brc{1, \ldots, n}}$, each one of the same 
length, $\frac{1}{n}$. Now, we take and $A_n$ the collection of small squares that has $x\in I_n^j$ and $y \in I_n^i$ 
with both $j,i$ even or odd simultaneously ; while
$B_n$ is the complement (that is, $x\in I_n^j$ and $y\in I_n^i$ are inside odd and even subintervals).
In this case we also obtain $\theta =1/2$ and the limit system is given by \eqref{sys1.k.}.

{\bf Small balls into small squares.}

The previous example can be modified, considering as $B_n$ the union of small balls of radius $r/n$, with $r<1/2$
centered inside the 
small squares of side of length $1/n$
(that the radius is smaller than a half of the length of the side of the small quire is needed to have disjoint 
small balls that are the connected components of $B_n$). In this case we obtain that $\theta$ is also constant and is given by the proportion of the small square that is occupied
by the ball
$$
\theta = k = \frac{|B_r(0)|}{|Q_1|} = \pi r^2 <\frac{\pi}{4} <1 .
$$

\subsection{Thin strips}

Now, we want to present an example in which a second derivative in one direction survives
from the Laplacian part of the operator. This example shows that our condition on the size of the diameters
of the connected components of $B_n$ is necessary to obtain Theorem \ref{teo.main.intro}. 

We can consider the following configuration. In $[0,1]\times [0,1] \subset \mathbb{R}^2$ take thin vertical strips as the sets
$A_n$ and $B_n$, that is,
$$
A_n = \bigcup_{j=1, \, j \mbox{ even}}^n [1/(j-1), 1/j) \times [0,1]
$$
and
$$
B_n = \bigcup_{j=1, \, j \mbox{ odd}}^n [1/(j-1), 1/j) \times [0,1].
$$
This is a partition of $\overline\Omega$ into disjoint subsets $A_n$, $B_n$ that are narrow strips of width $1/n$.
Notice that
$$
\chi_{A_n} (x,y) \rightharpoonup \frac12 \qquad \mbox{and} \qquad  \chi_{B_n} (x,y) \rightharpoonup \frac12.
$$
Hence we also have
$$
\theta \equiv \frac12
$$
in this case.

Also remark that in this example the condition 
$$
  \max_j \left\{\mbox{diam} (B^j_n) \right\} \to 0, \mbox{ as } n \to \infty
$$
does not hold.

Here, the generator associated with our process $X_n$ is given by
\begin{align}\label{gen.example.strips}
{L}_nf(x,y)=&\chi_{A_n}\pare{x,y}\int_\Omega J\pare{(x,y),(z,w)}\pare{f\pare{z,w}-f\pare{x,y}}\, dz \, dw\\[10pt]
&+\chi_{B_n}\pare{x,y}\int_\Omega \chi_{A_n}\pare{z,w}J\pare{(x,y),(z,w)}\pare{f\pare{z,w}-f\pare{x,y}}\, dz \, dw+\frac{1}{2}\chi_{B_n}\pare{x,y}\Delta f \pare{x,y}.
\end{align}

In this case, we can also pass to the limit (as we did before) and obtain
that, as $n\to \infty$,
\begin{equation}\label{limite.debil.u.intro.99}
\begin{array}{l}
\displaystyle u_n(x,y,t) \rightharpoonup u(x,y,t),\qquad \mbox{weakly in } L^2 (\Omega \times (0,T)), \\[10pt]
\displaystyle \chi_{B_n} (x,y) u_n(x,y,t) \rightharpoonup a(x,t),\qquad \mbox{weakly in } L^2 (\Omega \times (0,T)), \\[10pt]
\displaystyle \chi_{A_n} (x,y) u_n(x,y,t) \rightharpoonup b(x,t),\qquad \mbox{weakly in } L^2 (\Omega \times (0,T)). 
\end{array}
\end{equation}
These limits verify
$$
u(x,y,t) = a(x,y,t) + b(x,y,t)
$$
and are characterized by
the fact that $(a,b)$ is the unique solution to the following system,
\begin{equation}\label{sys1.intro.44}
\left\{
\begin{array}{l}
\displaystyle \frac{\partial a}{\partial t}\pare{x,y, t}=\int_{\Omega}J(\pare{x, y},(z,w))\pare{a\pare{y,w,t}-a\pare{x,y,t}}dzdw
\\[10pt]
\displaystyle \qquad 
-\frac12 \int_{\Omega}J(\pare{x, y},(z,w)) a\pare{z,w,t}dzdw+ \frac12
\int_{\Omega}J(\pare{x,y},(z,w)) b\pare{z,w,t}dzdw  \\[10pt]
\displaystyle \frac{\partial b}{\partial t}\pare{x,y, t}=\frac12 \int_{\Omega} J(\pare{x, y},(z,w)) a\pare{z,w,t}dzdw
-b\pare{x,y,t} \frac12 \int_{\Omega} 
J(\pare{x, y},(z,w)) dzdw + \frac14 \frac{\partial^2 b}{\partial y^2} (x,y,t)
\\[10pt]
\displaystyle   - \frac{\partial b}{\partial y} (x,0,t) = \frac{\partial b}{\partial y} (x,1,t) = 0,
\\[10pt]
\displaystyle a\pare{x,y, 0}=\frac12 u_0\pare{x,y}, \quad b\pare{x,y, 0}=\frac12 u_0\pare{x,y},
\end{array} \right.
\end{equation}
with $(x,y)\in \Omega,$ $t>0$.

Notice that now a second derivative in the $y$ direction survives in the second equation. 

The only modification needed in the proof is to consider, as approximating sequence $\phi_n(x,y)$
$$
\phi_n\pare{x,y, t}=
\left\{
\begin{array}{ll} \displaystyle 
\kint_{B_n^j}\phi(z,y, t)dz &\text{if }(x,y)\in B_n^j, t\geq 0\\[10pt]
\phi\pare{x,y, t} &\text{if }(x,y) \in A_n^j, t\geq 0.
\end{array}
\right.
$$
Notice that for these functions $\phi_n$ we have
$$
\chi_{B_n} (x,y) \frac{\partial^2 \phi_n}{\partial x^2} \pare{x,y,t} \equiv 0, \qquad \mbox{ but } \qquad
\chi_{B_n} (x,y) \frac{\partial^2 \phi_n}{\partial y^2} \pare{x,y,t} \equiv \chi_{B_n} (x,y) \frac{\partial^2 \phi}{\partial y^2} \pare{x,y,t},
$$
and hence
$$
\frac12 \chi_{B_n} (x,y)  \Delta \phi_n (x,y,t) = \frac12 \chi_{B_n} (x,y) \frac{\partial^2 \phi}{\partial y^2} \pare{x,y,t} \rightharpoonup 
\frac14 \frac{\partial^2 \phi}{\partial y^2} \pare{x,y,t}.
$$
Therefore, the second derivative in $y$ survives in the equation for $b$.

All the other terms that appear in the equations can be handled as we did before. For example, we have
$$
\begin{array}{l}
\displaystyle 
\int_0^T\int_\Omega \int_\Omega J(\pare{x, y}, (z,w)
\chi_{B_n}\pare{x,y}a_n\pare{z,w, t}\phi_n\pare{x,y, t}\, dydxdzdw dt 
\\[10pt]
\qquad \displaystyle \xrightarrow[n\to +\infty]{}\int_0^T\int_\Omega 
\int_\Omega J(\pare{x, y},(z,w)) \frac12 a\pare{z,w, t}\phi\pare{x,y, t}\, dydxdzdwdt.
\end{array}
$$

Finally, let us point out that when the thin strips are oriented according to the direction of a vector $v$ then it is
the second derivative in the direction of $v$ the one that survives. 

Concerning the limit process we can look at the limit problem \eqref{sys1.intro.44} as a system describing
the movement of a particle that, as before, has a label (white or black), the probability density of being white is given by $a(x,t)$ and 
the probability density of being black is $b(x,t)$. The transition probabilities between labels and the jumps inside the domain
are as before (for instance, a particle that is black at $x$ at time $t$ becomes white and jumps 
to $y$ with a probability $\int_{\Omega} 
J\pare{x, y}\pare{1-\theta (y)}dy$) but this time particles with a black label also move according to a one dimensional
Brownian motion in the direction of $v$ between two consecutive jumps (this is reflected in the fact that
a second derivative in the $v$ direction survives).

\medskip

\noindent{\large \textbf{Acknowledgments}}

\noindent We want to thank Ines Armendariz for several interesting discussions.

\noindent Partially supported by 

- CONICET grant PIP GI No 11220150100036CO
(Argentina), 

- UBACyT grant 20020160100155BA (Argentina), 

- Project MTM2015-70227-P (Spain).



\noindent\textbf{Address:}

{Monia Capanna and Julio D. Rossi
\hfill\break\indent
CONICET and Departamento  de Matem{\'a}tica, FCEyN,
Universidad de Buenos Aires, Ciudad Universitaria, Pabellon I, (1428).
Buenos Aires, Argentina.}
\hfill\break\indent
{{\tt moniacapanna@gmail.com, jrossi@dm.uba.ar}}

\end{document}